\newcounter{ENUM}
\newcommand{\itm}{\item}
\newenvironment{ilist}{\renewcommand{\theENUM}{\roman{ENUM}}\renewcommand{\itm}{\addtocounter{ENUM}{1}\item[(\theENUM)]}\begin{itemize}\setcounter{ENUM}{0}}{\end{itemize}}
\newenvironment{alist}[1][0]{\renewcommand{\theENUM}{\alph{ENUM}}\renewcommand{\itm}{\addtocounter{ENUM}{1}\item[\theENUM)]}\begin{itemize}\setcounter{ENUM}{#1}}{\end{itemize}}
\newcommand{\nTilde}[1]{\widetilde{#1}}                    
\def\x{{\mathbf x}}
\def\y{{\mathbf y}}
\def\n{{\mathbf n}}
\def\v{{\mathbf v}}
\def\1{{\mathbf 1}}
\def\diag{\operatorname{diag}}
\def\Z{{\mathbb Z}}
\def\N{{\mathbb N}}
\def\Q{{\mathbb Q}}
\def\R{{\mathbb R}}
\def\H{{\mathbb H}}
\def\L{{\mathcal L}}
\def\sgn{\operatorname{sign}}
\def\conv{\mathrm{conv}}
\def\aff{\mathrm{aff}}
\def\vol{\mathrm{Vol}}
\newtheorem{thm}{Theorem}[section]
\newtheorem{prop}[thm]{Proposition}
\newtheorem{lem}[thm]{Lemma}
\newtheorem{cor}[thm]{Corollary}
\theoremstyle{definition}
\newtheorem{defn}[thm]{Definition}
\newtheorem{ex}[thm]{Example}
\theoremstyle{remark}
\newtheorem{rem}[thm]{Remark}
\numberwithin{equation}{section}
\subjclass[2000]{Primary 05A19; Secondary 52B20}
\address{Department of Mathematics, One Shields Avenue, University of California, Davis 95616.}
\keywords{Ehrhart polynomial, lattice-face, polytope}
\email{fuliu@math.ucdavis.edu}
\begin{document}
\title{A note on lattice-face polytopes and their Ehrhart polynomials}
\author{Fu Liu}
\begin{abstract}
We give a new definition of lattice-face polytopes by removing an unnecessary restriction in \cite{lattice-face}, and show that with the new definition, the Ehrhart polynomial of a lattice-face polytope still has the property that each coefficient is the normalized volume of a projection of the original polytope. Furthermore, we show that the new family of lattice-face polytopes contains all possible combinatorial types of rational polytopes.
\end{abstract}

\maketitle

\section{Introduction}

A {\it convex polytope} is a convex hull of a finite set of points.
We often omit ``convex'' and just write polytope. The {\it face poset} of a polytope $P$ is the set of all faces of $P$ ordered by inclusion. We say two polytopes have the {\it same combinatorial type} if they have the same face poset. 

The $d$-dimensional {\it lattice} $\Z^d = \{\x = (x_1, \dots, x_d) \ |
\  x_i \in \Z\}$ is the collection of all points with integer
coordinates in $\R^d.$ Any point in a lattice is called a {\it
lattice point}. For any polytope $P$ and positive integer $m \in \N,$ we denote by $i(m,P)$ the
number of lattice points in $m P,$ where $m P = \{ m x | x \in P \}$
is the {\it $m$th dilated polytope} of $P.$

An {\it integral} or {\it lattice} polytope is a convex polytope
whose vertices are all lattice points. A {\it rational} polytope is a convex polytope whose vertices are in $\Q^d.$ Eug\`{e}ne Ehrhart
\cite{Ehrhart} discovered that for any $d$-dimensional rational
polytope, $i(P,m)$ is a quasi-polynomial of
degree $d$ in $m,$ whose period divides the least common multiple of
the denominators of the coordinates of the vertices of $P.$ (See \cite{ec1} for a definition of quasi-polynomials. We do not include it here because it is irrelevant to this paper.) In
particular, if $P$ is an integral polytope, $i(P,m)$ is a
polynomial. Thus, we
call $i(P,m)$ the {\it Ehrhart polynomial} of $P$ when $P$ is an integral
polytope. See \cite{Barvinok, BeckRobins} for further references
to the literature of lattice point counting. Although Ehrhart's
theory was developed in the 1960's, we still do not have a very good understanding of
the coefficients of Ehrhart polynomials for general polytopes except
that the leading, second and last coefficients of $i(P,m)$ are the
normalized volume of $P$, one half of the normalized volume of the
boundary of $P$ and $1,$ respectively. 

In \cite{cyclic}, the author showed that for any $d$-dimensional integral cyclic polytope $P,$ we have that \begin{equation}\label{ques}
i(P, m) =  \vol(mP) + i(\pi(P), m) = \sum_{k=0}^d
\vol_k(\pi^{(d-k)}(P)) m^k,
\end{equation}
where $\pi^{(k)}: \R^d \to \R^{d-k}$ is the map which ignores the
last $k$ coordinates of a point. In \cite{lattice-face}, the author generalized the family of integral cyclic polytope to a bigger family of integral polytopes, {\it
lattice-face} polytopes, and showed that their
Ehrhart polynomials also satisfy (\ref{ques}).

One question that has been asked often is: {\it how big is the family of lattice-face polytopes?} The motivation of this paper is to answer this question. We examine the definition of lattice-face polytopes given in \cite{lattice-face}, and notice there is a unnecessary restriction. After removing this restriction, we give a new definition of lattice-face polytopes (Definition \ref{dflf1}). With this new definition, we have two main results of this paper:

\begin{thm}\label{main1}
For any lattice-face polytope $P$, the Ehrhart polynomial of $P$ satisfies \eqref{ques}. In other words, the coefficient of $m^k$ in $i(P,m)$ is the normalized volume of the projection $\pi^{(d-k)}(P)$.
\end{thm}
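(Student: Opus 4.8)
The plan is to reduce Theorem~\ref{main1} to the single recursive identity
$$ i(P,m) \;=\; \vol(mP) + i\bigl(\pi^{(1)}(P),\, m\bigr) $$
and to prove this by induction on $d=\dim P$. Granting the identity, the closed form in \eqref{ques} follows by iteration: the projection $\pi^{(1)}(P)$ should again be a $(d-1)$-dimensional lattice-face polytope --- a point one must verify directly from Definition~\ref{dflf1}, the expectation being that each clause in the definition of $P$ restricts to the corresponding clause for $\pi^{(1)}(P)$ once the data involving the last coordinate is discarded --- so the inductive hypothesis gives $i(\pi^{(1)}(P),m)=\sum_{k=0}^{d-1}\vol_k(\pi^{(d-1-k)}(\pi^{(1)}(P)))\,m^k$, and combining this with $\pi^{(d-1-k)}\circ\pi^{(1)}=\pi^{(d-k)}$ and $\vol(mP)=\vol_d(P)\,m^d$ yields exactly \eqref{ques}. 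The cases $d\le 1$ are immediate.

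For the inductive step I would sort the points of $\Z^d\cap mP$ by their image under $\pi:=\pi^{(1)}$. Set $\overline P=\pi(P)$. Because a lattice-face polytope is in general position with respect to the coordinate flag, $\overline P$ is a full-dimensional $(d-1)$-polytope and $P$ has no facet parallel to the $x_d$-axis; hence for each $\y\in m\overline P$ the fiber $(\pi|_{mP})^{-1}(\y)$ is a vertical segment from $(\y,\ell_m(\y))$ to $(\y,u_m(\y))$, whose lower endpoint lies on a lower facet and whose upper endpoint lies on an upper facet of $mP$, and which shrinks to a point exactly when $\y\in\partial(m\overline P)$. The essential input from Definition~\ref{dflf1} is that every lower or upper facet $F$ of $P$ has affine span cut out by an equation $x_d=\langle a,\pi(x)\rangle+b$ with $a\in\Z^{d-1}$ and $b\in\Z$; consequently $\ell_m$ and $u_m$ are integer-valued at every lattice point $\y\in\Z^{d-1}\cap m\overline P$, and the fiber over such a $\y$ contains precisely $u_m(\y)-\ell_m(\y)+1$ lattice points. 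At this step I would record exactly which clauses of Definition~\ref{dflf1} are used, so as to confirm that the restriction removed from the definition in \cite{lattice-face} is never needed.

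Summing over $\y\in\Z^{d-1}\cap m\overline P$ then gives
$$ i(P,m)=\sum_{\y}\bigl(u_m(\y)-\ell_m(\y)\bigr)\;+\;\sum_{\y}1\;=\;\Phi(m)+i(\pi(P),m), $$
where $\Phi(m):=\sum_{\y}\phi_m(\y)$ and $\phi_m(\y)=u_m(\y)-\ell_m(\y)$ is the length of the fiber over $\y$. The remaining task is to show $\Phi(m)=\vol(mP)=\int_{m\overline P}\phi_m$. The function $\phi_m$ is non-negative, continuous, piecewise linear, vanishes on $\partial(m\overline P)$, and is affine on each cell of the lattice subdivision of $m\overline P$ obtained by overlaying the images under $\pi$ of the lower and upper facets of $mP$; for such functions the discrete sum agrees with the integral by a trapezoidal-rule cancellation. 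In dimension $1$ this is the elementary identity $\sum_{j\in\Z}\phi_m(j)=\int\phi_m$ for a piecewise-linear hat that is affine between consecutive integers and vanishes at the ends; in general one slices $m\overline P$ by the hyperplanes $x_{d-1}=\mathrm{const}$ and runs the same induction, the point being that the lattice-face hypothesis forces the induced subdivision of $\overline P$ and the restrictions of $\phi_m$ to the slices to be of the type for which the Euler--Maclaurin boundary corrections telescope away.

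The main obstacle I anticipate is exactly this last step: the equality $\sum_{\text{lattice points}}\phi_m=\int\phi_m$ fails for general piecewise-linear functions vanishing on the boundary, so proving it here genuinely uses the rigidity that Definition~\ref{dflf1} imposes on how the projected facets subdivide $\overline P$; packaging this as a clean secondary induction --- together with the verification that $\pi^{(1)}$ preserves the new lattice-face property, so that the primary induction is well founded --- is where the real work lies.
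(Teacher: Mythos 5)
Your top-level skeleton coincides with the paper's: both arguments reduce Theorem~\ref{main1} to the single recursion $i(P,m)=\vol(mP)+i(\pi(P),m)$, both obtain the summand $i(\pi(P),m)$ from the fact that the lattice points of the negative (lower) boundary of $mP$ biject with $\L(\pi(mP))$ (this is Lemma~\ref{plf}/(iii), resting on the integrality of the fiber endpoints from Lemma~\ref{plf}/(v), exactly as in your fiber count $u_m(\y)-\ell_m(\y)+1$), and both iterate using the fact that $\pi(P)$ is again a lattice-face polytope (Lemma~\ref{plf}/(i)). The divergence, and the genuine gap, is in the remaining identity $\sum_{\y}\bigl(u_m(\y)-\ell_m(\y)\bigr)=\vol(mP)$, i.e.\ $|\L(\Omega(mP))|=\vol(mP)$. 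You do not prove this; you propose an Euler--Maclaurin/trapezoidal cancellation and explicitly defer ``the real work.'' That deferred step \emph{is} the theorem: as you yourself observe, $\sum_{\y\in\Z^{d-1}}\phi(\y)=\int\phi$ fails for general nonnegative piecewise-linear $\phi$ vanishing on the boundary (the hat function on a unimodular triangle with apex at the barycenter has positive integral but vanishes at every lattice point), and nothing in your sketch isolates which consequence of Definition~\ref{dflf1} rescues the identity. Slicing by $x_{d-1}=\mathrm{const}$ does not obviously run either: the slice sums $t\mapsto\sum_{y_{d-1}=t}\phi_m(\y)$ are not functions to which your one-dimensional trapezoidal identity applies, and controlling them forces you back into a recursive structure you have not set up.

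The paper closes exactly this gap by a different mechanism: triangulate $P$ without new vertices, check that every cell is again a lattice-face simplex (Proposition~\ref{validtri}), show that the nonnegative parts add over the triangulation because each cell satisfies the fiber condition \eqref{one} (Lemma~\ref{lemone} together with Lemma~\ref{lfsone}), and then invoke the base case $|\L(\Omega(\Delta))|=\vol(\Delta)$ for a lattice-face simplex, which is Theorem~3.6 of \cite{lattice-face} and is itself a nontrivial result proved there by a separate argument. So to complete your proof you would either have to reprove that simplex count or make the telescoping of boundary corrections rigorous, and the latter is essentially equivalent to the former. The rest of your write-up --- integrality of $\ell_m$ and $u_m$ at lattice fibers, the absence of vertical facets, the identification of the $+1$ terms with $i(\pi(P),m)$, and the need to verify that $\pi$ preserves the lattice-face property --- is correct and matches the paper.
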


\begin{thm}\label{main2}
The family of lattice-face polytopes contains all combinatorial types of rational polytopes. More specifically, given any full-dimensional rational polytope $P,$ there exists an invertible linear transformation $\phi$, such that $\phi(P)$ is a lattice-face polytope.
\end{thm}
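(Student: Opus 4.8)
I will organize the argument around the observation that the second (more precise) statement contains the first, since an invertible linear map induces an isomorphism of face posets; so the whole task is: given a full‑dimensional rational $P\subseteq\R^d$, produce $\phi\in\GL_d(\Q)$ — which I will take of a very explicit shape — with $\phi(P)$ satisfying Definition~\ref{dflf1}. The plan is to split the requirements of Definition~\ref{dflf1} on a face $F$ of dimension $k$ into two kinds: (a) a \emph{position} condition, that $\pi^{(d-k)}$ be injective on $\aff(F)$ — equivalently, that the direction space $\operatorname{lin}(F)$ (the linear span of $F-F$) be transverse to the coordinate subspace $\langle e_{k+1},\dots,e_d\rangle$ killed by $\pi^{(d-k)}$; and (b) an \emph{arithmetic} condition, that $\pi^{(d-k)}$ carry $\aff(F)\cap\Z^d$ onto $\Z^k$. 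Condition (a) is Zariski‑open in $\phi$, while (b) is a divisibility condition; I would arrange (a) by genericity and then repair (b) by a carefully chosen diagonal dilation that provably leaves (a) untouched.

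First I would put $P$ in general position. For each face $F$ of $P$ with $\dim F=k$, the requirement that $\phi(\operatorname{lin} F)$ project isomorphically onto $\R^{k}$ under $\pi^{(d-k)}$ is the non‑vanishing of a fixed $k\times k$ minor in the entries of $\phi$, hence holds on a dense Zariski‑open subset of $\GL_d$ (nonempty since it holds for any $\phi$ sending a basis of $\operatorname{lin}(F)$ to $e_1,\dots,e_k$). Since $P$ has finitely many faces and $\Q$ is Zariski‑dense, I may choose $\phi_1\in\GL_d(\Q)$ so that $\phi_1(P)$ satisfies all of the transversality conditions (a) simultaneously; $\phi_1(P)$ is still full‑dimensional and rational. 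Clearing denominators, set $P_2=D\,\phi_1(P)$ for a suitable $D\in\Z_{>0}$; this is a lattice polytope, and dilation does not change any direction space $\operatorname{lin}(F)$, so $P_2$ still satisfies (a). Using transversality, each face $F$ of $P_2$ with $\dim F=k$ has $\operatorname{lin}(F)=\{(y,A_Fy):y\in\R^k\}$ for a unique rational matrix $A_F$.

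Next I would fix the arithmetic by one diagonal scaling. Let $N$ be a positive integer divisible by every denominator appearing among the entries of all the matrices $A_F$, put $\psi=\diag(1,N,N^2,\dots,N^{d-1})$, and set $\phi=\psi\circ D\circ\phi_1\in\GL_d(\Q)$, so $\phi(P)=\psi(P_2)$. Being diagonal, $\psi$ preserves every coordinate subspace $\langle e_{k+1},\dots,e_d\rangle$, so $\psi(\operatorname{lin} F)\cap\langle e_{k+1},\dots,e_d\rangle=\psi\bigl(\operatorname{lin}(F)\cap\langle e_{k+1},\dots,e_d\rangle\bigr)=0$ and all of (a) survives; being an integer matrix, $\psi$ keeps $\phi(P)$ a lattice polytope (in particular all vertices are lattice points, which is the $k=0$ case of Definition~\ref{dflf1}). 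A direct computation gives $\operatorname{lin}(\psi F)=\{(z,B_Fz):z\in\R^k\}$ with $B_F=\psi'A_F\psi_k^{-1}$, where $\psi_k=\diag(1,\dots,N^{k-1})$ and $\psi'=\diag(N^{k},\dots,N^{d-1})$; its $(i,j)$‑entry equals $N^{\,k+i-j}$ times that of $A_F$, and since $k+i-j\ge 1$ while $N$ kills the denominator of $(A_F)_{ij}$, the matrix $B_F$ is integral. Hence $\pi^{(d-k)}$ maps $\aff(\psi F)\cap\Z^d$ onto $\Z^k$, giving (b). Therefore $\phi(P)$ satisfies Definition~\ref{dflf1}.

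The main obstacle is exactly this last step: one must see that a \emph{single} transformation repairs (b) for \emph{all} faces at once and can be taken to leave (a) intact. The key point is that the ``defect'' of (b) at a face $F$ is controlled by the denominators of $A_F$, and that stretching the later coordinates turns $\operatorname{lin}(\psi F)$ into a coordinate graph with integral matrix — but the stretch must be \emph{geometrically} increasing ($1,N,N^2,\dots$), so that the negative powers of $N$ coming from $\psi_k^{-1}$ are always beaten by the positive powers from $\psi'$; verifying $k+i-j\ge 1$ uniformly, and checking that the diagonal form of $\psi$ is what preserves every transversality, is the crux of the matter. (An alternative route is induction on $d$ via the projection $\pi^{(1)}$, handling $\pi^{(1)}(P)$ by the inductive hypothesis and then adjusting only the last coordinate by a generic shear followed by a large scaling; but the bookkeeping there is heavier, so I would present the direct argument above.)
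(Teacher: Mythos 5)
There is a genuine gap, and it lies in \emph{what} you verify rather than \emph{how} you verify it: throughout the argument you impose the position condition (a) and the arithmetic condition (b) only on the affine hulls of \emph{faces} $F$ of $P$. But Definition~\ref{dflf1} (equivalently, the reformulation in Remark~\ref{dflf2}) quantifies over \emph{all} subsets $U$ of the vertex set $V$ that span a $k$-dimensional affine space, for every $0\le k\le d-1$ --- and such a subset need not be a face. For example, for a $3$-polytope, two vertices not joined by an edge span a line, and three vertices not lying on a common facet span a plane cutting through the interior; the definition still demands $\pi^{(d-k)}(\L(\aff(U)))=\Z^k$ for these. This is not a pedantic distinction: the full condition is exactly what makes Proposition~\ref{validtri} work (a triangulation of $P$ without new vertices uses simplices whose facets are spanned by arbitrary $d$-subsets of $V$, not facets of $P$), so a polytope satisfying your weaker list of conditions is not known to be a lattice-face polytope and Theorem~\ref{main1} would not apply to it. The repair is immediate, since your two key steps use only finiteness: run the Zariski-genericity argument over the finitely many affine spans of \emph{all} vertex subsets, and let $N$ absorb the denominators of the matrices $A_U$ for all such subsets $U$. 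With that change the argument goes through; your exponent computation $(B_U)_{ij}=N^{k+i-j}(A_U)_{ij}$ with $k+i-j\ge 1$ is correct, and combined with the integrality of the vertices it does yield $\pi^{(d-k)}(\L(\aff(\psi U)))=\Z^k$.

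Once corrected, your route is a legitimate variant of the paper's. The paper also splits the task into a position step and an arithmetic step: Proposition~\ref{reg2gp} achieves ``$\pi$-general position'' by an explicit inductive construction of unipotent upper-triangular shears (Lemma~\ref{findM}), where you invoke Zariski-density of $\Q$-points in $\GL_d$ --- slicker, at the cost of losing the explicit unipotent integral form of the first transformation. For the arithmetic step the paper chooses a diagonal matrix $\diag(c_1,\dots,c_d)$ inductively, with $c_d$ a product of one integer $c_U$ per spanned hyperplane (Lemma~\ref{findc}), whereas you use the single geometric diagonal $\diag(1,N,\dots,N^{d-1})$ and check all dimensions $k$ at once via the inequality $k+i-j\ge 1$; this avoids the induction on $d$ entirely and is arguably more transparent, though the paper's version more directly matches the recursive structure of Definition~\ref{dflf1}.
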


We remark that the proof given in \cite{lattice-face} for the lattice-face polytopes under the old definition would work for proving Theorem \ref{main1}. However, we use a slightly different approach in this paper. We will prove Theorem \ref{main1} by using Proposition \ref{tri2form}, which provides an even bigger family of polytopes whose Ehrhart polynomials satisfy \eqref{ques}. See Examples \ref{tri1} and \ref{tri2} for polytopes that are not lattice-face polytopes but are covered by Proposition \ref{tri2form}.

It follows from Theorem \ref{main1} and Theorem \ref{main2} that for any rational polytope $P,$ one can apply a linear transformation $\phi$ to $P,$ such that $i(\phi(P),m)$ is a polynomial having the same simple form as (\ref{ques}). Thus, all the coefficients of $i(\phi(P), m)$ have a geometric meaning and are positive. 
This result provides a possible method to prove positivity conjectures on coefficients of the Ehrhart polynomials of integral polytopes. Namely, given an integral polytope $P,$ if among all the piecewise linear transformations $\phi$ such that $\phi(P)$ is a lattice-face polytope, one can find one that preserves the lattice, then one can conclude that the coefficients of $i(P,m)$ are all positive.

\section{Preliminaries}

We first give some definitions and notation, most of which follows
\cite{lattice-face}, and also present relevant results obtained in \cite{lattice-face}.

All polytopes we will consider are full-dimensional unless otherwise noted, so for any
convex polytope $P,$ we denote by $d$ both the dimension of the
ambient space $\R^d$ and the dimension of $P.$ We call a
$d$-dimensional polytope a $d$-polytope. 
We denote by $\partial P$ the boundary.
A {\it $d$-simplex} is a polytope given as the convex hull of $d+1$
affinely independent points in $\R^d.$

For any set $S,$ we denote by $\conv(S)$ the convex hull of all
the points in $S,$ and by $\aff(S)$ the affine hull of all the points in $S.$

Recall that the projection $\pi: \R^d
\to \R^{d-1}$ is the map that forgets the last coordinate. For any
set $S \subset \R^d$ and any point $y \in \R^{d-1},$ let $\rho(y, S)
= \pi^{-1}(y) \cap S$ be the intersection of $S$ with the inverse
image of $y$ under $\pi.$ If $S$ is bounded, let $p(y, S)$ and $n(y, S)$ be the point
in $\rho(y,S)$ with the largest and smallest last coordinate,
respectively. If $\rho(y,S)$ is the empty set, i.e., $y \not\in
\pi(S),$ then let $p(y, S)$ and $n(y,S)$ be empty sets as well.
Clearly, if $S$ is a $d$-polytope, $p(y, S)$ and $n(y, S)$ are on
the boundary of $S.$ Also, we let $\rho^+(y,S) = \rho(y,S)\setminus
n(y,S),$ and for any $T \subset \R^{d-1},$ we let $\rho^+(T,S) = \bigcup_{y
\in T} \rho^+(y,S).$

\begin{defn}\label{dfbdry}
Define $PB(P) = \bigcup_{y \in \pi(P)} p(y,P)$ to be the {\it
positive boundary} of $P;$ $NB(P) = \bigcup_{y \in \pi(P)} n(y,P)$ to
be the {\it negative boundary} of $P$ and $\Omega(P) = P \setminus
NB(P) = \rho^+(\pi(P), P) = \bigcup_{y \in \pi(P)} \rho^+(y,P)$ to be
the {\it nonnegative part} of $P.$
\end{defn}
\begin{defn}
For any facet $F$ of $P,$ if $F$ has an interior point in the
positive boundary of $P,$  then we call $F$ a {\it positive facet}
of $P$ and define the sign of $F$ to be $+1: \sgn(F) = + 1.$ Similarly,
we can define the {\it negative facets} of $P$ with associated sign
$-1.$  
\end{defn}
It's easy to see that $F \subset PB(P)$ if $F$ is a positive facet
and $F \subset NB(P)$ if $F$ is a negative facet.

We write $P = \bigsqcup_{i=1}^k P_i$ if $P = \bigcup_{i=1}^k P_i$
and for any $i \neq j$, $P_i \cap P_j$ is contained in their
boundaries. If $F_1, F_2, \dots, F_{\ell}$ are all the positive
facets of $P$ and $F_{\ell +1}, \dots, F_k$ are all the negative
facets of $P,$ then $$\pi(P) = \bigsqcup_{i=1}^{\ell} \pi(F_i) =
\bigsqcup_{i=\ell+1}^k \pi(F_i).$$

 Because the usual set union
and set minus operation do not count the number of occurrences of an
element, which is important in our paper, from now on we will
consider any polytopes or sets as {\it multisets} which allow {\it
negative multiplicities.} In other words, we consider any element of
a multiset as a pair $(\x, m),$ where $m$ is the multiplicity of
element $\x.$ Then for any multisets $M_1, M_2$ and any integers
$m,n$ and $i,$ we define the following operators:
\begin{alist}
\itm Scalar product: $i M_1 = i \cdot M_1 = \{ (\x, i m) \ | \ (\x,
m) \in M_1\}.$ \itm Addition: $M_1 \oplus M_2 = \{ (\x, m + n) \ | \
(\x, m) \in M_1, (\x, n) \in M_2 \}.$ \itm Subtraction: $M_1 \ominus
M_2 = M_1 \oplus ((-1) \cdot M_2).$
\end{alist}



Let $P$ be a convex polytope. For any $y$ an interior point of
$\pi(P),$ since $\pi$ is a continuous open map, the inverse image of
$y$ contains an interior point of $P.$ Thus $\pi^{-1}(y)$ intersects
the boundary of $P$ exactly twice. For any $y$ a boundary point of
$\pi(P),$ again because $\pi$ is an open map, we have that $\rho(y,
P) \subset \partial P,$ so $\rho(y,P) = \pi^{-1}(y) \cap \partial P$
is either one point or a line segment. We consider
polytopes $P$ which have the property:
\begin{equation}\label{one}
|\rho(y,P)| = 1, \forall y \in \partial \pi(P),
\end{equation}
i.e., $\rho(y,P)$ always has only one point for a boundary point $y.$ We will see later (Corollary \ref{lfone}) that any lattice-face polytope $P$ has the property \eqref{one}. The following lemma on polytopes satisfying \eqref{one} will be used in the proof of Theorem \ref{main1}.

\begin{lem}\label{lemone}
If $P = \bigsqcup_{i=1}^k P_i,$ where all $P_i$'s satisfy \eqref{one}, then $P$ satisfies \eqref{one} as well.
Furthermore,  $\Omega(P) = \bigoplus_{i=1}^k \Omega(P_i).$
\end{lem}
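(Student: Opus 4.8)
The plan is to prove the two assertions in sequence, reducing everything to what happens fiber-by-fiber over $\pi(P)$. The first assertion -- that $P$ satisfies \eqref{one} -- is essentially a combinatorial bookkeeping statement, and the second -- that $\Omega(P) = \bigoplus_{i=1}^k \Omega(P_i)$ as multisets -- then follows by comparing the two sides over each fiber separately.

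First I would fix a point $y \in \partial\pi(P)$ and analyze $\rho(y,P)$. Since $P = \bigcup_{i=1}^k P_i$ and $\pi$ commutes with union, we have $\rho(y,P) = \bigcup_{i=1}^k \rho(y, P_i)$, where the union is over those $i$ with $y \in \pi(P_i)$. The point is that for each such $i$, either $y$ is a boundary point of $\pi(P_i)$ -- in which case $\rho(y,P_i)$ is a single point by \eqref{one} applied to $P_i$ -- or $y$ is an interior point of $\pi(P_i)$, in which case $\rho(y,P_i) = \pi^{-1}(y)\cap P_i$ is a (possibly degenerate) segment whose endpoints $p(y,P_i), n(y,P_i)$ lie in $\partial P_i$. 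To conclude $|\rho(y,P)| = 1$, I need to rule out the second alternative: if $y \in \partial\pi(P)$ and $y$ were an interior point of some $\pi(P_i)$, then a neighborhood of $y$ in $\R^{d-1}$ would be covered by $\pi(P_i) \subseteq \pi(P)$, contradicting $y \in \partial\pi(P)$ (using that $\pi(P)$ is a $(d-1)$-polytope and $\partial\pi(P)$ is its topological boundary). Hence every $\rho(y,P_i)$ contributing to $\rho(y,P)$ is a single point; since all these points lie on the line $\pi^{-1}(y)$ and $\rho(y,P) = \pi^{-1}(y)\cap P$ is convex (as $P$ is convex and $\pi^{-1}(y)$ is a line), a union of points that is convex must itself be a single point. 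This gives \eqref{one} for $P$.

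For the multiset identity, I would again work over a fixed $y \in \pi(P)$ and compare $\rho^+(y,P)$ with $\bigoplus_i \rho^+(y,P_i)$; taking the union over all $y \in \pi(P)$ then yields $\Omega(P) = \bigoplus_i \Omega(P_i)$ by Definition \ref{dfbdry}. For $y$ an interior point of $\pi(P)$, the segment $\rho(y,P)$ is subdivided by the $P_i$'s into finitely many subsegments (the $\rho(y,P_i)$ for $i$ with $y\in\pi(P_i)$, which meet only at endpoints since $P_i\cap P_j$ lies in their boundaries and hence its intersection with a generic vertical line is at most a point). Writing these subsegments in order along $\pi^{-1}(y)$ from bottom to top, the bottom endpoint of each equals the top endpoint of the previous one; removing the bottom point $n(y,P_i)$ from each piece via the $\rho^+$ operation exactly telescopes, so that the disjoint (multiset) sum $\bigoplus_i \rho^+(y,P_i)$ equals $\rho(y,P)$ with only its global bottom point $n(y,P)$ removed, i.e. $\rho^+(y,P)$; any point that is a shared endpoint of two consecutive pieces is counted once with multiplicity $+1$ in exactly the upper piece, and the arithmetic of multiplicities works out because of the $\ominus n(y,\cdot)$ built into $\rho^+$. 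The boundary case $y \in \partial\pi(P)$ is handled by the first part: there $\rho(y,P)$ and each nonempty $\rho(y,P_i)$ are single points, all equal, and at most one of them is "positive" in the relevant sense, so the identity holds trivially there (or, more cleanly, one notes $\partial\pi(P)$ has measure zero and the identity over the interior already determines $\Omega(P)$ up to a measure-zero set, which suffices for the volume computations in Theorem \ref{main1} -- though I would prefer to give the exact multiset statement).

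The main obstacle is the careful multiset accounting in the telescoping step: one must make sure that vertices of $P$ or of the $P_i$ lying on shared vertical lines are not double-counted or dropped, which is precisely why the paper has set up the convention of multisets with signed multiplicities and the $\rho^+$ operator. The key technical inputs are that $P_i \cap P_j$ is contained in the boundaries of both (so its slice by a generic line $\pi^{-1}(y)$ is a single point or empty), that each $\rho(y,P_i)$ is convex, and that $\rho^+ = \rho \ominus n(y,\cdot)$ is designed exactly so that a decomposition of a segment into consecutive subsegments induces a decomposition of its "half-open" version into disjoint "half-open" pieces.
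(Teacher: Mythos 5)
Your proposal is correct, and its first half is essentially the paper's own argument: fix $y\in\partial\pi(P)$, rule out $y$ being interior to any $\pi(P_i)$ (since $\pi(P_i)\subseteq\pi(P)$ is full-dimensional, that would force $y$ into the interior of $\pi(P)$), conclude each $\rho(y,P_i)$ is a point or empty by \eqref{one}, and then note that $\rho(y,P)$, being a finite set, cannot be a line segment, hence is a single point. Where you diverge is the multiset identity $\Omega(P)=\bigoplus_{i=1}^k\Omega(P_i)$: the paper does not prove this at all, but simply quotes Lemma 2.5/(v) of \cite{lattice-face}, which is stated under the hypothesis that $P$ and all the $P_i$ satisfy \eqref{one} --- this is exactly why the paper's proof reduces to the first assertion. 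You instead reprove the identity directly, fiber by fiber over $y\in\pi(P)$, by ordering the pieces $\rho(y,P_i)$ along $\pi^{-1}(y)$ and telescoping the removed bottom points; the key facts you use (pieces meet in at most an endpoint because $P_i\cap P_j\subseteq\partial P_i\cap\partial P_j$, non-endpoints of a nondegenerate piece are interior to $P_i$, degenerate pieces over $\partial\pi(P_i)$ contribute nothing to $\rho^+$) are all correct and make the lemma self-contained rather than dependent on the earlier paper. Two small remarks: over $y\in\partial\pi(P)$ the cleaner statement is that \emph{no} piece contributes --- $\rho^+(y,P)$ and every $\rho^+(y,P_i)$ are empty since each fiber is a single point --- rather than ``at most one''; and the measure-zero fallback you mention would not suffice, since $\Omega(P)$ is used for exact lattice-point counts, not just volumes, so the exact multiset statement you prefer is indeed the one needed.
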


\begin{proof}
In Lemma 2.5/(v) of \cite{lattice-face}, we have already shown that if $P$ and the $P_i$'s all satisfy \eqref{one}, then $\Omega(P) = \bigoplus_{i=1}^k \Omega(P_i).$ Therefore, it is enough to show that the condition that all $P_i$'s satisfy \eqref{one} implies that $P$ satisfies \eqref{one}.

For any $y \in \partial \pi(P),$ since $P = \bigsqcup_{i=1}^k P_i,$  we have that $\rho(y, P) = \bigcup_{i=1}^k \rho(y, P_i).$ If $y \in \partial \pi(P_i),$ then $\rho(y, P_i)$ has one element. Otherwise if $y$ is not in $\partial \pi(P_i),$ since $y$ is a boundary point of $\pi(P),$ $y$ cannot be a point in the interior of $P_i,$ so $y \not\in P_i$ and $\rho(y, P_i)$ is the empty set. Hence, in either case, $\rho(y, P_i)$ has finitely many points, for each $i = 1, \dots, k.$ Therefore, $\rho(y, P)$ has finitely many points and cannot be a line segment. Thus, $|\rho(y,P)| = 1.$
\end{proof}









For simplicity, for any set $S \in
\R^d,$ we denote by $\L(S) = S \cap \Z^d$ the set of lattice points
in $S.$ 

\section{A new definition of Lattice-face polytopes}

We first recall the old definition of lattice-face polytopes given in \cite{lattice-face}. 
\subsection*{Old definition (Definition 3.1 in \cite{lattice-face}):} We define {\it lattice-face} polytopes recursively.
We call a $1$-dimensional polytope a {\it lattice-face} polytope if
it is integral.

For $d \ge 2,$ we call a $d$-dimensional polytope $P$ with vertex
set $V$ a {\it lattice-face} polytope if for any $d$-subset $U
\subset V,$
\begin{alist}
\itm $\pi(\conv(U))$ is a lattice-face polytope, and

\itm $\pi(\L(\aff(U))) = \Z^{d-1}.$ In other words, after dropping the last coordinate
of the lattice of $\aff(U),$ we get the $(d-1)$-dimensional lattice.
\end{alist}

With this old definition, it is shown in Lemma 3.3/(v) of \cite{lattice-face} that for any lattice-face $d$-polytope with vertex set $V,$ 
\begin{equation}\label{bad}
\mbox{any $d$-subset $U$ of $V$ forms a $(d-1)$-simplex.}
\end{equation}
This implies that any $(d-2)$-dimensional face only has $(d-1)$ vertices. It is clear that not any rational polytope has this property, e.g., a $4$-dimensional cube. Therefore, with the old definition, the family of lattice-face polytopes does not contain all combinatorial types of rational polytopes. Luckily, we are able to revise the definition such that the restriction \eqref{bad} does not apply.
 
\begin{defn}\label{dflf1} We define {\it lattice-face} polytopes recursively.
We call a $1$-dimensional polytope a {\it lattice-face} polytope if
it is integral.

For $d \ge 2,$ we call a $d$-dimensional polytope $P$ with vertex
set $V$ a {\it lattice-face} polytope if for any subset $U
\subset V$ spanning a $(d-1)$-dimensional affine space,
\begin{alist}
\itm $\pi(\conv(U))$ is a lattice-face polytope, and

\itm $\pi(\L(\aff(U))) = \Z^{d-1}.$ 
\end{alist}
\end{defn}

\begin{rem}\label{dflf2} We have an alternative definition of lattice-face
polytopes, which is equivalent to Definition \ref{dflf1}. Indeed, a
$d$-polytope on a vertex set $V$ is a lattice-face polytope if and
only if  for all $k$ with $0 \le k \le d-1,$
for any subset $U \subset V$ spanning a $k$-dimensional affine space,  $\pi^{(d-k)}(\L(\aff(U))) =
\Z^k.$
\end{rem}

To avoid confusion, from now on, we will call lattice-face polytopes defined under Definition 3.1 in \cite{lattice-face} {\it old lattice-face polytopes}.

Lemma 3.3 in \cite{lattice-face} gives properties of an old lattice-face polytope. All but one of the properties still hold for lattice-face polytopes under the new definition, and the proofs are similar. We state them here without a proof.

\begin{lem}\label{plf}
 Let $P$ be a lattice-face $d$-polytope with vertex set
$V,$ then we have:
\begin{ilist}
\itm $\pi(P)$ is a lattice-face $(d-1)$-polytope.

\itm $m P$ is a lattice-face $d$-polytope, for any positive integer
$m.$

\itm $\pi$ induces a bijection between $\L(NB(P))$ (or $\L(PB(P))$)
and $\L(\pi(P)).$

\itm $\pi(\L(P)) = \L(\pi(P)).$

\itm Let $H$ be a $(d-1)$-dimensional affine space spanned by some subset of $V.$
Then for any lattice point $y \in \Z^{d-1},$ we have that $\rho(y,
H)$ is a lattice point.

\itm $P$ is an integral polytope.
\end{ilist}
\end{lem}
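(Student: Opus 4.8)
The plan is to verify each item directly against the non-recursive characterization of Remark~\ref{dflf2}, treating (vi) and (v) first, then (i) and (ii), and finally deriving (iii) and (iv) from (v); once Remark~\ref{dflf2} is available, no induction on $d$ is needed. For (vi), the $k=0$ instance of Remark~\ref{dflf2} applied to a single vertex $\{v\}\subset V$ is exactly the assertion $\L(\{v\})\neq\emptyset$, i.e.\ $v\in\Z^d$, so every vertex is integral. For (v), write the hyperplane as $H=\aff(U_0)$ with $U_0\subset V$ spanning a $(d-1)$-dimensional affine space; the $k=d-1$ instance of Remark~\ref{dflf2} gives $\pi(\L(H))=\Z^{d-1}$, and since this image is full-dimensional while $\dim H=d-1$, the map $\pi|_H$ is an affine isomorphism onto $\R^{d-1}$. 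Hence $\rho(y,H)$ is always a single point, and for $y\in\Z^{d-1}$ surjectivity of $\pi|_{\L(H)}$ forces that point to be a lattice point. The same reasoning applied to the vertex set of a facet $F$ of $P$ shows $\pi$ is injective on $\aff(F)$; this is used in (iii).

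For (i), recall that every vertex of $\pi(P)$ is $\pi(v)$ for some vertex $v$ of $P$. Given a subset $U'$ of the vertex set of $\pi(P)$ spanning a $k$-dimensional affine space with $0\le k\le d-2$, lift it to $U_0\subset V$ with $\pi(U_0)=U'$. The crux is that $\aff(U_0)$ is again $k$-dimensional: it surjects onto the $k$-dimensional space $\aff(U')$ under $\pi$, so $\dim\aff(U_0)\ge k$; and if $\dim\aff(U_0)=k'>k$ then $k'\le d-1$ (otherwise $\pi(\aff(U_0))=\R^{d-1}\neq\aff(U')$), so Remark~\ref{dflf2} for $P$ would force $\pi^{(d-k')}(\L(\aff(U_0)))=\Z^{k'}$ — impossible, since $\pi^{(d-k')}(\aff(U_0))=\pi^{(d-1-k')}(\aff(U'))$ has dimension at most $k<k'$. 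Once $\dim\aff(U_0)=k$, Remark~\ref{dflf2} for $P$ gives $\pi^{(d-k)}(\L(\aff(U_0)))=\Z^k$; factoring $\pi^{(d-k)}=\pi^{(d-1-k)}\circ\pi$ and using $\pi(\L(\aff(U_0)))\subset\L(\pi(\aff(U_0)))=\L(\aff(U'))$ sandwiches $\pi^{(d-1-k)}(\L(\aff(U')))$ between $\Z^k$ and $\Z^k$, which is exactly Remark~\ref{dflf2} for $\pi(P)$. For (ii), the vertex set of $mP$ is $mV$, a subset $U=mU_0$ spans a $k$-dimensional affine space precisely when $U_0$ does, and $\aff(U)=m\,\aff(U_0)$. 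By Remark~\ref{dflf2} for $P$, the $k$-plane $\aff(U_0)$ is the graph over the first $k$ coordinates of an affine map $u\mapsto Mu+t$ with $M$ an integer matrix and $t$ an integer vector; then $m\,\aff(U_0)$ is the graph of $v\mapsto Mv+mt$, still with integral data, so Remark~\ref{dflf2} holds for $mP$.

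For (iii), $\pi$ restricts to a bijection $NB(P)\to\pi(P)$ because $n(y,P)$ is a single point for each $y\in\pi(P)$; injectivity on lattice points is then automatic, so it remains to show each $y\in\L(\pi(P))$ is the image of a lattice point of $NB(P)$. Choose a negative facet $F$ with $y\in\pi(F)$ (using $\pi(P)=\bigsqcup\pi(F_i)$ over the negative facets); then $\aff(F)$ is $(d-1)$-dimensional and spanned by a subset of $V$, so by (v) the unique point of $\aff(F)$ over $y$ is a lattice point, and since $y\in\pi(F)$ with $\pi|_{\aff(F)}$ injective this point lies in $F\subset NB(P)$, hence equals $n(y,P)$. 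The same argument with a positive facet and $p(y,P)$ handles $PB(P)$. Property (iv) is then immediate: by (iii), $\L(\pi(P))=\pi(\L(NB(P)))\subset\pi(\L(P))\subset\L(\pi(P))$, so all three coincide. I expect the main obstacle to be the dimension count in (i): under the old definition the restriction \eqref{bad} made every subset of $V$ of size at most $d$ affinely independent, so a lifted subset automatically spanned an affine space of the correct dimension, whereas under Definition~\ref{dflf1} this has to be extracted from the lattice condition in Remark~\ref{dflf2}.
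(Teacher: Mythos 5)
Your proof is correct, and it takes a genuinely different (and more self-contained) route than the paper, which states Lemma~\ref{plf} without proof and defers to the inductive arguments for Lemma~3.3 of \cite{lattice-face}, carried out directly from the recursive Definition~\ref{dflf1} by induction on $d$. You instead derive all six items from the non-recursive characterization in Remark~\ref{dflf2}: the $k=0$ case gives (vi) immediately, the $k=d-1$ case gives (v) (and injectivity of $\pi$ on facet hyperplanes), and (iii)--(iv) then follow from (v) together with the paper's decomposition of $\pi(P)$ into projections of negative (resp.\ positive) facets. The two genuinely new ingredients you supply are exactly where the new definition differs from the old one: in (i), the dimension count showing that a lift $U_0$ of $U'$ spans a $k$-dimensional space (under the old definition this was automatic from \eqref{bad}, and your contradiction via $\pi^{(d-k')}(\L(\aff(U_0)))=\Z^{k'}$ versus the image lying in a $k$-dimensional set is the right fix), and in (ii), the observation that the lattice condition forces $\aff(U_0)$ to be the graph of an integer affine map, which is stable under dilation by $m$. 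What your approach buys is the elimination of the induction on $d$ and a uniform verification of all items; what it costs is that it leans entirely on the equivalence asserted in Remark~\ref{dflf2}, which the paper states without proof, so the recursion is hidden there rather than removed -- within the paper's logical structure this is legitimate, since the remark is presented before and independently of Lemma~\ref{plf}, but a fully self-contained write-up would need to prove that equivalence (or argue as in \cite{lattice-face}).
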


One might ask what is the relation between the family of old lattice-face polytopes and the family of newly defined lattice-face polytopes. We have the following lemma.
\begin{lem}
Every old lattice-face polytope is a lattice-face polytope.
\end{lem}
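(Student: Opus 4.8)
The plan is to prove, by induction on the dimension $d$, that every old lattice-face $d$-polytope $P$ with vertex set $V$ satisfies the criterion of Remark \ref{dflf2}: for every $k$ with $0\le k\le d-1$ and every subset $U\subseteq V$ whose affine hull is $k$-dimensional, one has $\pi^{(d-k)}(\L(\aff(U)))=\Z^k$. Since that remark records that this criterion is equivalent to being a (new) lattice-face polytope, establishing it finishes the proof. The base case $d=1$ is immediate: the only option is $k=0$ with $U$ a single vertex, which is a lattice point because a $1$-dimensional old lattice-face polytope is integral, and $\pi^{(1)}$ collapses everything to a point.

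The key auxiliary observation, used throughout the inductive step, is the following. Let $P$ be an old lattice-face $d$-polytope and $W\subseteq V$ any $d$-subset of its vertices. Condition (b) of the old definition gives $\pi(\L(\aff(W)))=\Z^{d-1}$; hence $\pi(\aff(W))$ is an affine subspace of $\R^{d-1}$ containing $\Z^{d-1}$, so $\pi(\aff(W))=\R^{d-1}$, which forces $\dim\aff(W)=d-1$ and $\pi|_{\aff(W)}\colon\aff(W)\to\R^{d-1}$ to be an affine bijection. Because $\pi(\L(\aff(W)))=\Z^{d-1}$, this bijection restricts to a bijection $\L(\aff(W))\risom\Z^{d-1}$, so every lattice point of $\R^{d-1}$ lifts uniquely, inside $\aff(W)$, to a lattice point. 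Consequently, for any sub-affine-space $\aff(U')\subseteq\aff(W)$ we get both $\aff(\pi(U'))=\pi(\aff(U'))$ with $\dim\pi(\aff(U'))=\dim\aff(U')$, and $\L(\pi(\aff(U')))=\pi(\L(\aff(U')))$ (a lattice point of $\pi(\aff(U'))$ lifts to a lattice point of $\aff(W)$, which must lie in $\aff(U')$ by injectivity of $\pi|_{\aff(W)}$).

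Now fix $d\ge 2$, an old lattice-face $d$-polytope $P$ with vertex set $V$, and $U\subseteq V$ spanning a $k$-dimensional affine space. If $k=d-1$, pick $d$ affinely independent points $W\subseteq U$; then $\aff(W)=\aff(U)$, and condition (b) of the old definition applied to $W$ gives $\pi^{(d-k)}(\L(\aff(U)))=\pi(\L(\aff(W)))=\Z^{d-1}=\Z^{k}$. If $k\le d-2$, first replace $U$ by an affinely independent subset $U'\subseteq U$ with $\aff(U')=\aff(U)$, so $\L(\aff(U'))=\L(\aff(U))$ and $|U'|=k+1\le d-1$; then extend $U'$ to a $d$-subset $W\subseteq V$, which is possible since $|V|\ge d+1$. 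By condition (a) of the old definition, $P':=\pi(\conv(W))$ is an old lattice-face $(d-1)$-polytope, and since $\pi|_{\aff(W)}$ is an affine bijection carrying the simplex $\conv(W)$ to $P'$, the set $\pi(U')$ is a subset of the vertices of $P'$ spanning a $k$-dimensional affine space with $k\le (d-1)-1$. The induction hypothesis applied to $P'$ gives $\pi^{(d-1-k)}(\L(\aff(\pi(U'))))=\Z^k$. Combining this with the observation above ($\L(\aff(\pi(U')))=\pi(\L(\aff(U')))$) and with $\pi^{(d-1-k)}\circ\pi=\pi^{(d-k)}$ yields
\[
\pi^{(d-k)}(\L(\aff(U)))=\pi^{(d-1-k)}\bigl(\pi(\L(\aff(U')))\bigr)=\pi^{(d-1-k)}\bigl(\L(\aff(\pi(U')))\bigr)=\Z^k,
\]
completing the induction, and hence the proof via Remark \ref{dflf2}.

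I expect the main obstacle to be organizational rather than substantive: the old definition only speaks of $d$-subsets of $V$, whereas the new definition (and the criterion of Remark \ref{dflf2}) ranges over arbitrary subsets, so the argument must first reduce an arbitrary spanning subset to an affinely independent one and then re-enlarge it to a $d$-subset before the recursion of the old definition can be invoked. The other point requiring care is verifying that $\pi$ restricts to a lattice isomorphism between $\aff(W)$ and $\R^{d-1}$, which is what allows the lattice-point condition to be transported down the projection and fed into the inductive hypothesis.
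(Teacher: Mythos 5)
Your proof is correct, but it follows a noticeably different route from the paper's. The paper verifies the recursive Definition \ref{dflf1} directly by induction on $d$: for a subset $U$ of the vertex set spanning a $(d-1)$-dimensional space it cites property \eqref{bad} (Lemma 3.3/(v) of \cite{lattice-face}) to get $\aff(W)=\aff(U)$ for any $d$-subset $W\subseteq U$, which gives condition b), and for condition a) it shows that $\pi(\conv(U))$ is itself an \emph{old} lattice-face $(d-1)$-polytope (using the auxiliary claim that a polytope is old lattice-face once the convex hulls of all $d$-subsets of its vertices are), so that the induction hypothesis can be applied to $\pi(\conv(U))$. You instead verify the equivalent criterion of Remark \ref{dflf2}, re-derive the nondegeneracy fact \eqref{bad} (together with the stronger statement that $\pi$ restricts to a lattice-point-preserving affine bijection from $\aff(W)$ onto $\R^{d-1}$, essentially Lemma \ref{plf}/(v)) directly from condition b) of the old definition, and apply the induction hypothesis only to the projected simplices $\pi(\conv(W))$, which are old lattice-face immediately by condition a). What your route buys: you never need to argue that $\pi(\conv(U))$ is an old lattice-face polytope, so you bypass both the citation of \eqref{bad} and the paper's unproved ``easy to check'' step; the handling of low-dimensional spans ($k\le d-2$) by completing an affinely independent spanning subset $U'$ to a $d$-subset $W$ and transporting lattice points along $\pi|_{\aff(W)}$ is carried out carefully and is sound. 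The price is that your argument rests on the equivalence asserted without proof in Remark \ref{dflf2}, whereas the paper checks Definition \ref{dflf1} itself; since the paper states that equivalence as part of its setup, this reliance is acceptable, but a fully self-contained version of your proof would need to include the (short) verification of that equivalence.
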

\begin{proof}
We prove the lemma by induction on $d,$ the dimension of the polytope. If $d=1,$ an old lattice-face polytope is a lattice-face polytope by definition. Suppose $d \ge 2$ and any old lattice-face polytope of dimension smaller than $d$ is a lattice-face polytope. Let $P$ be an old lattice-face $d$-polytope with vertex set $V.$ For any subset $U \subset V$ spanning a $(d-1)$-dimensional affine space, we must have that $|U| \ge d.$ For any $d$-subset $W$ of $U,$ by the definition of old lattice-face polytopes, we have the following:
\begin{itemize}
\itm $\pi(\conv(W))$ is an old lattice-face polytope.
\itm $\pi(\L(\aff(W))) = \Z^{d-1}.$
\end{itemize}
Using \eqref{bad}, $W$ forms a $(d-1)$-simplex. Therefore,
$$\aff(W) = \aff(U) \ \Rightarrow \pi(\L(\aff(U))) = \Z^{d-1}.$$

Let $\overline{V}$ be the vertex set of $\pi(\conv(U)).$ For any $d$-subset $\overline{U} \subset \overline{V},$ there exists a $d$-subset $W$ of $U$ such that $\pi(W) = \overline{U}.$ Then we have that $\pi(\conv(W)) = \conv(\pi(W)) = \conv(\overline{U})$ is an old lattice-face polytope. It is easy to check that if for any $d$-subset $\overline{U} \subset \overline{V},$ we have that $\conv(\overline{U})$ is an old lattice-face polytope, then $\conv(\overline{V})$ is an old lattice-face polytope. Therefore, we conclude that $\pi(\conv(U))$ is an old lattice-face $(d-1)$-polytope.
Thus, by the induction hypothesis, $\pi(\conv(U))$ is a lattice-face $(d-1)$-polytope. Therefore, $P$ is a lattice-face polytope.

\end{proof}

It is easy to check that if $P$ is a $d$-simplex, then $P$ is an old lattice-face polytope if and only if $P$ is a lattice-face polytope. Therefore, the following proposition follows from Theorem 3.6 in \cite{lattice-face}.

\begin{prop}
For any $P$ a lattice-face simplex, the number of lattice points in the nonnegative part of $P$ is equal to the volume of $P:$
$$|\L(\Omega(P))| = \vol(P).$$
\end{prop}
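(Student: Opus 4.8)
The plan is to reduce the statement to the old lattice-face case and then invoke Theorem 3.6 of \cite{lattice-face}. The key observation is that for simplices the two notions coincide, so the proposition is not really a new result but a translation.

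\medskip

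\textbf{Step 1: reduce to the simplex case by dimension count.} Let $P$ be a lattice-face $d$-simplex with vertex set $V$, so $|V| = d+1$. The only subsets of $V$ spanning a $(d-1)$-dimensional affine space are the $d$-subsets $U \subset V$, and for each such $U$ the set $\conv(U)$ is automatically a $(d-1)$-simplex. Thus, when $P$ is a simplex, condition (a) in Definition \ref{dflf1} asks exactly that $\pi(\conv(U))$ be a lattice-face $(d-1)$-polytope for every $d$-subset $U$, and condition (b) asks $\pi(\L(\aff(U))) = \Z^{d-1}$ for every $d$-subset; since $\conv(\pi(U))$ has vertex set $\pi(U)$ (the projection of $d$ affinely independent points spanning a non-vertical hyperplane), these are exactly the recursive conditions (a), (b) defining an \emph{old} lattice-face polytope. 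One should check the base case $d=1$ (both definitions say ``integral'') and run the recursion: a lattice-face $d$-simplex projects under $\pi$ to a lattice-face $(d-1)$-simplex, which by induction is an old lattice-face $(d-1)$-simplex, and the affine-lattice conditions transfer verbatim. Hence $P$ is a lattice-face simplex if and only if $P$ is an old lattice-face simplex.

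\medskip

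\textbf{Step 2: quote Theorem 3.6 of \cite{lattice-face}.} That theorem states precisely $|\L(\Omega(P))| = \vol(P)$ for old lattice-face simplices $P$. Combining with Step 1, the same identity holds for every lattice-face simplex, which is the assertion of the proposition. (If one prefers normalized volume, note that for a full-dimensional simplex in $\R^d$ the normalized volume with respect to $\Z^d$ agrees with the Euclidean volume $\vol$ as used throughout, so there is no discrepancy.)

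\medskip

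\textbf{Main obstacle.} The only nontrivial point is the equivalence in Step 1, and even there the content is essentially bookkeeping: one must make sure that ``every $d$-subset of $V$'' and ``every subset spanning a $(d-1)$-space'' really describe the same collection when $|V| = d+1$, and that the recursion in Definition \ref{dflf1} unwinds, on simplices, to the recursion in the old definition --- in particular that the projection of a lattice-face simplex is again a simplex (so the inductive hypothesis applies) and that no genuinely new $(d-1)$-dimensional affine spans appear. None of this requires new ideas beyond the properties already recorded in Lemma \ref{plf} and the combinatorics of simplices, so the proposition follows immediately once Step 1 is spelled out.
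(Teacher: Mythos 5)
Your proposal is correct and follows exactly the paper's route: the paper likewise observes that for a $d$-simplex the old and new definitions of lattice-face polytope coincide (you spell out the bookkeeping the paper leaves as "easy to check") and then invokes Theorem 3.6 of \cite{lattice-face}.
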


\section{Proof of Theorem \ref{main1}}
We break the proof of Theorem \ref{main1} into the following two propositions.

\begin{prop}\label{validtri}
Let $P$ be a lattice-face $d$-polytope, and $P = \bigsqcup_{i=1}^\ell P_i$ be a triangulation of $P$ without introducing new vertices. Then each $P_i$ is a lattice-face $d$-simplex.
\end{prop}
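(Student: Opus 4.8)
The plan is to induct on $d$, the dimension of $P$. The base case $d = 1$ is trivial since a triangulation of an integral $1$-polytope without new vertices just reproduces $P$ itself, which is a lattice-face $1$-simplex. For the inductive step, fix a lattice-face $d$-polytope $P$ with vertex set $V$, and a triangulation $P = \bigsqcup_{i=1}^\ell P_i$ using no new vertices, so that the vertex set of each $P_i$ is a $(d+1)$-subset of $V$ in general position (i.e., affinely independent). I must verify that each such $P_i$ satisfies the two conditions of Definition \ref{dflf1}.

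First I would check condition (b) for $P_i$. Let $W$ be the vertex set of $P_i$ and let $U \subset W$ be any $d$-subset spanning a $(d-1)$-dimensional affine space; since $W$ is affinely independent, every $d$-subset does so. Then $U \subset V$ also spans a $(d-1)$-dimensional affine space, so by the lattice-face property of $P$ (condition (b) applied to $U$), we get $\pi(\L(\aff(U))) = \Z^{d-1}$. That is exactly condition (b) for $P_i$, and this part needs essentially no work.

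Condition (a) — that $\pi(\conv(U))$ is a lattice-face $(d-1)$-polytope for each such $U$ — is where the real content lies. The idea is: $U \subset V$ spans a $(d-1)$-space, so by condition (a) for $P$, the polytope $Q := \pi(\conv(U))$ is a lattice-face $(d-1)$-polytope; moreover $Q$ is a $(d-1)$-simplex since $U$ (being affinely independent of size $d$) maps under $\pi$ to $d$ affinely independent points — here one uses condition (b)/Lemma \ref{plf}(v) to see $\pi$ is injective on $\aff(U)$, so $\pi(U)$ stays affinely independent. Thus $\pi(\conv(U)) = \conv(\pi(U))$ is genuinely a lattice-face $(d-1)$-simplex, and in particular a lattice-face $(d-1)$-polytope, which is condition (a) for $P_i$. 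Having verified both (a) and (b), $P_i$ is a lattice-face $d$-polytope; since its vertex set $W$ has exactly $d+1$ affinely independent points, $P_i$ is a lattice-face $d$-simplex, completing the induction.

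The main obstacle is the step showing $\pi$ restricted to $\aff(U)$ is injective, so that $\pi$ carries the affinely independent set $U$ to an affinely independent set and $\pi(\conv(U)) = \conv(\pi(U))$. This requires that $\aff(U)$ is not "vertical" (does not contain a fiber of $\pi$); this follows from condition (b) for $P$ applied to $U$, because if $\aff(U)$ contained a line in the kernel direction of $\pi$, then $\aff(U)$ would be at most $(d-1)$-dimensional yet $\pi(\aff(U))$ would be at most $(d-2)$-dimensional, contradicting $\pi(\L(\aff(U))) = \Z^{d-1}$ (which forces $\pi(\aff(U)) = \R^{d-1}$). Once this is in hand, everything else is bookkeeping with the recursive definition.
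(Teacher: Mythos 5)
Your proposal is correct and follows exactly the route the paper intends (the paper omits the proof, saying it is a direct check of Definition \ref{dflf1}): since each $P_i$ has vertex set $W\subset V$, any $U\subset W$ spanning a $(d-1)$-dimensional affine space is also such a subset of $V$, so conditions a) and b) for $P_i$ are literally among the conditions already guaranteed by $P$ being lattice-face. The induction on $d$ and the argument that $\pi(\conv(U))$ is a simplex are harmless but unnecessary, as the inductive hypothesis is never used and condition a) only asks that $\pi(\conv(U))$ be a lattice-face polytope.
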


\begin{prop}\label{tri2form}
For any $d$-polytope $P,$ if $P$ has a triangulation $\bigsqcup_{i=1}^\ell P_i$ consisting of lattice-face $d$-simplices, then $\Omega(P) = \bigoplus_{i=1}^\ell \Omega(P_i).$ Thus,
$$|\L(\Omega(P))| = \vol(P).$$
Furthermore, the Ehrhart polynomial of $P$ is given by \eqref{ques}
$$i(P, m) =  \vol(mP) + i(\pi(P), m) = \sum_{k=0}^d
\vol_k(\pi^{(d-k)}(P)) m^k.$$
\end{prop}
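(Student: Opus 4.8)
The plan is to induct on the dimension $d$. When $d=1$, a lattice-face $1$-simplex is an integral segment $[a,b]$, for which $i([a,b],m)=m(b-a)+1=\vol(m[a,b])+i(\pi([a,b]),m)$, so \eqref{ques} holds; a triangulation of a segment into such pieces adds nothing new. So assume $d\ge 2$ and that the proposition holds in dimension $d-1$.

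I first settle the volume statement. Each $P_i$ is a lattice-face polytope, hence satisfies \eqref{one} by Corollary \ref{lfone}; by Lemma \ref{lemone} the polytope $P$ then satisfies \eqref{one} and $\Omega(P)=\bigoplus_{i=1}^{\ell}\Omega(P_i)$. Counting lattice points and using that $|\L(\Omega(Q))|=\vol(Q)$ for every lattice-face simplex $Q$ (established above), we get $|\L(\Omega(P))|=\sum_{i=1}^{\ell}|\L(\Omega(P_i))|=\sum_{i=1}^{\ell}\vol(P_i)=\vol(P)$, the last equality because $\bigsqcup_i P_i$ is a triangulation. Applying Lemma \ref{plf}(ii) to each $P_i$, the dilate $mP=\bigsqcup_i mP_i$ is again a triangulation into lattice-face simplices, so the same computation yields $|\L(\Omega(mP))|=\vol(mP)$ for every positive integer $m$.

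Next I reduce \eqref{ques} to its analogue for $\pi(P)$. Since $\Omega(mP)=mP\setminus NB(mP)$, we have $i(P,m)=|\L(mP)|=|\L(\Omega(mP))|+|\L(NB(mP))|=\vol(mP)+|\L(NB(mP))|$, so it is enough to identify $|\L(NB(mP))|$ with $i(\pi(P),m)=|\L(\pi(mP))|$, where $\pi(mP)=m\pi(P)$. The map $\pi$ restricts to a bijection from $NB(mP)$ onto $\pi(mP)$ with inverse $y\mapsto n(y,mP)$, because $n(y,mP)$ is a single point; so it suffices to show $n(y,mP)\in\Z^d$ whenever $y\in\L(\pi(mP))$. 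For this, note that $\{mP_i\}$, using no vertices outside those of $P$, restricts to a triangulation of each facet of $mP$, hence of $NB(mP)$; its top cells are facets of the $mP_i$'s, and such a cell $G\subseteq NB(mP)$, being contained in $mP_i\subseteq mP$, is a negative facet of $mP_i$. Given $y\in\L(\pi(mP))$, choose a top cell $G$ with $y\in\pi(G)$; the unique point of $G$ over $y$ lies in $\rho(y,mP)\cap NB(mP)=\{n(y,mP)\}$ and in $\rho(y,mP_i)\cap NB(mP_i)=\{n(y,mP_i)\}$, so $n(y,mP)=n(y,mP_i)$, which is a lattice point by Lemma \ref{plf}(iii) applied to the lattice-face simplex $mP_i$ (here $y\in\L(\pi(mP_i))$). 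Thus $\pi$ restricts to a bijection $\L(NB(mP))\risom\L(\pi(mP))$ and $i(P,m)=\vol(mP)+i(\pi(P),m)$.

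Finally I close the induction. Taking $m=1$ above, the top cells of the induced triangulation of $NB(P)$ are negative facets $G=\conv(W)$ of the $P_i$'s, with $W$ a $d$-subset of the vertices of $P_i$; since $P_i$ is a $d$-simplex, $W$ is affinely independent, $\aff(W)$ is $(d-1)$-dimensional, and condition (a) of Definition \ref{dflf1} applied to $P_i$ tells us that $\pi(G)=\pi(\conv(W))$ is a lattice-face $(d-1)$-simplex. Since $\pi$ maps $NB(P)$ bijectively onto $\pi(P)$, the cells $\pi(G)$ triangulate $\pi(P)$ into lattice-face $(d-1)$-simplices, so the induction hypothesis gives $i(\pi(P),m)=\sum_{k=0}^{d-1}\vol_k\bigl(\pi^{(d-1-k)}(\pi(P))\bigr)m^k=\sum_{k=0}^{d-1}\vol_k\bigl(\pi^{(d-k)}(P)\bigr)m^k$, using $\pi^{(d-1-k)}\circ\pi=\pi^{(d-k)}$. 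Adding $\vol(mP)=\vol_d(P)\,m^d=\vol_d\bigl(\pi^{(0)}(P)\bigr)m^d$ yields $i(P,m)=\sum_{k=0}^{d}\vol_k\bigl(\pi^{(d-k)}(P)\bigr)m^k$. I expect the delicate point to be exactly this projection bookkeeping: checking that $\{P_i\}$ genuinely cuts out a triangulation of $NB(P)$ whose top cells are negative facets of the pieces, that these push forward bijectively --- thanks to property \eqref{one} --- to a triangulation of $\pi(P)$, and that the images are again lattice-face simplices; the last is precisely what condition (a) of Definition \ref{dflf1} is designed to provide, while the identification $n(y,mP)=n(y,mP_i)$ is what carries the boundary lattice-point count down one dimension.
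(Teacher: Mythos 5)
Your proof is correct and takes essentially the same route as the paper's: Lemmas \ref{lemone} and \ref{lfsone} give $\Omega(P)=\bigoplus_{i}\Omega(P_i)$ and $|\L(\Omega(P))|=\vol(P)$, the lattice points of $NB(mP)$ are identified with those of $\pi(mP)$ through the negative facets of the $mP_i$'s (you spell out the step $n(y,mP)=n(y,mP_i)$ that the paper compresses into a citation of Lemma \ref{plf}/(iii)), and the projected negative facets triangulate $\pi(P)$ into lattice-face $(d-1)$-simplices, so induction on $d$ finishes exactly as in the paper. One small quibble: your parenthetical ``using no vertices outside those of $P$'' is neither granted by the hypothesis---the proposition deliberately allows new vertices, cf.\ Example \ref{tri2}---nor needed, since the triangulation restricts to $NB(mP)$ with top cells that are facets of the $mP_i$'s regardless of where the vertices lie.
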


\begin{rem}
Note that we allow triangulation involving new vertices other than vertices of $P$ here. It is implicit in \cite{lattice-face} that if $P$ has a triangulation without new vertices consisting of lattice-face simplices, then the Ehrhart polynomial of $P$ satisfies \eqref{ques}. However, since we allow introducing new vertices to form a triangulation, Proposition \ref{tri2form} takes care of more cases. See Example \ref{tri2}. 
\end{rem}

It is clear that Theorem \ref{main1} follows from Proposition \ref{validtri} and Proposition \ref{tri2form}. Proposition \ref{validtri} can be proved directly by checking the definition of lattice-face polytopes, so we will only give the proof of Proposition \ref{tri2form}. Before that, we first prove the following lemma.

\begin{lem}\label{lfsone}
Any lattice-face simplex satisfies \eqref{one}.
\end{lem}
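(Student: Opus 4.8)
The plan is to reduce \eqref{one} to one structural observation: in a lattice-face $d$-simplex, the map $\pi$ restricts to an \emph{injection} on every facet. First the degenerate case: if $d = 1$, then $\pi(P)$ is a single point of $\R^{0}$, which has empty boundary, so \eqref{one} holds vacuously; assume $d \ge 2$. Let $P$ be a lattice-face $d$-simplex with vertex set $V = \{v_0, \dots, v_d\}$. Since the $v_i$ are affinely independent, the only subsets of $V$ spanning a $(d-1)$-dimensional affine space are the $d$-subsets $U_i := V \setminus \{v_i\}$, and $\conv(U_i) = F_i$ is the $i$th facet of $P$, with $\aff(U_i)$ a hyperplane. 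Part (b) of Definition \ref{dflf1} gives $\pi(\L(\aff(U_i))) = \Z^{d-1}$; in particular $\pi(\aff(U_i))$ is an affine subspace of $\R^{d-1}$ containing $\Z^{d-1}$, hence is all of $\R^{d-1}$. Thus $\pi|_{\aff(U_i)} \colon \aff(U_i) \to \R^{d-1}$ is a surjective affine map between affine spaces of the same dimension $d-1$, so it is a bijection, and in particular $\pi$ is injective on $F_i$.

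Next I would fix $y \in \partial \pi(P)$ and show $|\rho(y,P)| = 1$. As already recorded in the excerpt, for such a boundary point $y$ the openness of $\pi$ forces $\rho(y,P) = \pi^{-1}(y) \cap \partial P$; since $\partial P = \bigcup_{i=0}^{d} F_i$, this equals $\bigcup_{i=0}^{d} \rho(y, F_i)$. By the previous paragraph each $\rho(y,F_i) = \pi^{-1}(y) \cap F_i$ is either empty or a single point, so $\rho(y,P)$ is a finite set. On the other hand $\rho(y,P) = \pi^{-1}(y) \cap P$ is the intersection of a line with a convex body, hence a point, a segment, or empty; it is nonempty because $y \in \pi(P)$, and it cannot be a segment because it is finite. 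Therefore $\rho(y,P)$ consists of exactly one point, which is precisely \eqref{one}.

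The only mildly delicate points are the bookkeeping in the first paragraph — that affine independence of a simplex's vertices makes ``subsets spanning a $(d-1)$-dimensional affine space'' coincide with ``facets'', and that a surjective affine map between equidimensional affine spaces is automatically injective — but neither is a genuine obstacle, so I do not expect a hard step here. It is worth noting that condition (a) of Definition \ref{dflf1} plays no role in this lemma; only condition (b), applied to each facet, is used. (Alternatively one could invoke Lemma \ref{plf}(v) to see that $\pi$ is injective on the lattice points of $\aff(U_i)$ and then extend by affineness, but deducing bijectivity of $\pi|_{\aff(U_i)}$ directly from surjectivity is shorter.)
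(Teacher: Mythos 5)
Your proof is correct and is essentially the paper's argument in contrapositive form: both hinge on condition b) of Definition \ref{dflf1} forcing $\pi$ to be nondegenerate on the affine hull of each facet, so that a fiber over a boundary point of $\pi(P)$ cannot be a segment. The paper derives a direct contradiction (a segment in a facet would make $\pi(\aff(U))$ of dimension less than $d-1$, so $\pi(\L(\aff(U)))\neq\Z^{d-1}$), whereas you establish injectivity of $\pi$ on each facet up front; the content is the same.
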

\begin{proof}
Suppose $P$ is a lattice-face $d$-simplex that does not satisfy \eqref{one}. There exists $y \in \partial \pi(P)$ such that $\rho(y, P)$ is a line segment. Let $F$ be a facet of $P$ that contains $\rho(y, P)$ and $U$ be the vertex set of $F.$ Then $\aff(U)$ is a $(d-1)$-dimensional affine space in $\Z^d$ that contains a line $L = \aff(\rho(y,P)).$ Because $\pi$ sends $L$ to a point $y,$ the dimension of $\pi(\aff(U))$ is smaller than $d-1.$ Hence, $\pi(\L(\aff(U))) \neq \Z^{d-1}.$ This contradicts part b) in Definition \ref{dflf1}.
\end{proof}

The following corollary follows from Lemma \ref{lfsone}, Lemma \ref{lemone} and Proposition \ref{validtri}.
\begin{cor}\label{lfone}
Any lattice-face polytope satisfies \eqref{one}.
\end{cor}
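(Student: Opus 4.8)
The plan is to combine the three cited ingredients exactly as the statement advertises, bridging from simplices to arbitrary lattice-face polytopes via a triangulation. First I would invoke the existence of a triangulation of an arbitrary lattice-face $d$-polytope $P$ into $d$-simplices without introducing new vertices; this is a standard fact about polytopes (e.g. a pulling or placing triangulation on the vertex set of $P$), so I would simply cite it or reference the triangulations already used in Proposition \ref{validtri}. Write $P = \bigsqcup_{i=1}^\ell P_i$ for such a triangulation.

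Next, by Proposition \ref{validtri}, each piece $P_i$ of this triangulation is itself a lattice-face $d$-simplex. Then Lemma \ref{lfsone} applies to each $P_i$, giving that every $P_i$ satisfies property \eqref{one}. Finally, Lemma \ref{lemone} says that if $P = \bigsqcup_{i=1}^k P_i$ with all $P_i$ satisfying \eqref{one}, then $P$ satisfies \eqref{one}; applying it here yields that $P$ satisfies \eqref{one}, which is the claim.

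I do not anticipate a genuine obstacle: the corollary is essentially a bookkeeping assembly of results proved just above it. The only point requiring a modicum of care is ensuring that a triangulation of the required type (using no new vertices, so that Proposition \ref{validtri} is applicable) exists for every full-dimensional polytope; since the paper already works with such triangulations in the statement of Proposition \ref{validtri}, this is legitimate to assume. One should also note the degenerate case $d=1$, where a lattice-face $1$-polytope is just an integral segment and \eqref{one} holds trivially (the boundary of $\pi(P)$ is a point); this can be dispatched in one line or absorbed into the general argument since a $1$-simplex is already covered by Lemma \ref{lfsone}.
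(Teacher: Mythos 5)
Your argument is correct and is exactly the route the paper intends: triangulate $P$ without new vertices, apply Proposition \ref{validtri} to see each simplex is lattice-face, Lemma \ref{lfsone} to give each simplex property \eqref{one}, and Lemma \ref{lemone} to transfer it to $P$. Nothing is missing; the remark about the existence of the triangulation and the trivial $d=1$ case is fine but not needed beyond a citation.
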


\begin{proof}[Proof of Proposition \ref{tri2form}]
By Lemma \ref{lemone} and Lemma \ref{lfsone}, we immediately have $\Omega(P) = \bigoplus_{i=1}^\ell \Omega(P_i).$ Thus, 
$$|\L(\Omega(P))| = |\bigoplus_{i=1}^\ell \L(\Omega(P_i))| = \sum_{i=1}^\ell |\L(\Omega(P_i))| = \sum_{i=1}^\ell \vol(P_i) = \vol(P).$$
Hence, using this and Lemma \ref{plf}/(iii), $$|\L(P)| = |\L(\Omega(P))| + |\L(NB(P))| = \vol(P) + |\L(\pi(P))|.$$
Note that for any positive integer $m,$ the dilation $m P$ has a triangulation $\bigsqcup_{i=1}^\ell mP_i$ where each $m P_i$ is still a lattice-face $d$-simplex by Lemma \ref{plf}/(ii). Therefore,
\begin{equation}\label{ehrvol}
i(P,m) = |\L(mP)| = \vol(mP) + |\L(\pi(mP))| = \vol(mP) + i(\pi(P), m).
\end{equation}
Let $\{F_1, \dots, F_{\ell'} \}$ be the set of facets of $P_1, \dots, P_\ell$ that are contained in the negative boundary $NB(P)$ of $P.$ Then we have $NB(P) = \bigsqcup_{i=1}^{\ell'} F_i$ and 
$$\pi(P) = \pi(NB(P)) =  \bigsqcup_{i=1}^\ell \pi(F_i).$$
One checks that $\pi(F_i)$'s are lattice-face $(d-1)$-simplices. Therefore, we can replace $i(\pi(P), m)$ in \eqref{ehrvol} with $\vol_{d-1}(m \ \pi(P)) + i(\pi^{(2)}(P),m):$
$$i(P,m) = \vol(mP) + i(\pi(P), m) = \vol(mP) + \vol_{d-1}(m \ \pi(P)) + i(\pi^{(2)}(P),m).$$
Applying this argument inductively, we obtain
$$i(P,m) = \sum_{k=0}^d \vol_{k}(m \ \pi^{(d-k)}(P)) = \sum_{k=0}^d \vol_{k}(\pi^{(d-k)}(P)) m^k.$$
\end{proof}

As we mentioned in the introduction, Proposition \ref{tri2form} provides a larger family of polytopes than the family of lattice-face polytopes which still have Ehrhart polynomials satisfying \eqref{ques}. We will finish this section with two examples of polytopes $P$ where $P$ is not a lattice-face polytope, but by using Proposition \ref{tri2form}, we still have that $i(P,m)$ satisfies \eqref{ques}.
\begin{ex}
\label{tri1}
Let $P$ be the polygon with vertices $\{(0,0), (2,0), (1,1), (3,1) \}.$ One can check that $P$ is not a lattice-face polytope because $\pi(\L(\aff(\{(0,0),(3,1)\}))) = \{ 3z \ | \ z \in Z\} \neq \Z.$ However, $P$ has a triangulation without introducing new vertices $P = \conv(\{(0,0),(2,0),(1,1)\}) \sqcup \conv(\{(2,0),(1,1),(3,1)\}),$ where both triangles are lattice-face simplices. Therefore, by Proposition \ref{tri2form}, 
$$i(P,m) = \sum_{k=0}^2 \vol_{k}(\pi^{(2-k)}(P)) m^k =2m^2 + 3m +1.$$
\end{ex}

\begin{ex}
\label{tri2}
Let $P$ be the polygon with vertices $\{v_1=(0,0), v_2=(4,0), v_3=(3,5), v_4=(1,5) \}.$ One can check that $P$ is not a lattice-face polytope because $\pi(\L(\aff(\{v_1, v_3\}))) = \{ 3z \ | \ z \in Z\} \neq \Z.$ There are only two possible triangulations of $P$ without introducing new vertices, but neither of them is one consisting of lattice-face simplices. However, if we introduce a new vertex $v_5 = (2,4),$ we can obtain the triangulation $P = \conv(\{v_1, v_2, v_5\}) \sqcup \conv(\{(v_2, v_3, v_5\}) \sqcup \conv(\{(v_3, v_4, v_5\}) \sqcup \conv(\{(v_1, v_4, v_5\}),$ where all triangles are lattice-face simplices. Therefore, by Proposition \ref{tri2form}, 
$$i(P,m) = \sum_{k=0}^2 \vol_{k}(\pi^{(2-k)}(P)) m^k =15m^2 + 4m +1.$$
\end{ex}

\section{Polytopes in $\pi$-general position and the proof of Theorem \ref{main2}}
For any linear transformation $\phi: \R^d \to \R^d,$ we can associate a $d \times d$ matrix $M$ to $\phi,$ such that $\phi(\x) = M(x).$ Therefore, when we describe a linear transformation, we often just describe it by its associated matrix. 

We denote by  $\diag(A_1, \dots, A_k)$ the block diagonal matrix with square matrices $A_1, \dots, A_k$ on the diagonal. In particular,  $\diag(c_1, \dots, c_d)$ denotes the $d \times d$ diagonal matrix with diagonal entries $c_1, \dots, c_d.$

\begin{defn} We say that a finite set $V \subset \R^d$ is in {\it
$\pi$-general position} if $\aff(V) = \R^d$ and for any $k: 0 \le k \le d-1,$ and any subset $U \subset V$, we have that 
\begin{equation}\label{gpprop}
\mbox{if $\aff(U)$ is $k$-dimensional, then $\pi^{d-k}(\aff(U))$ is $k$-dimensional.}
\end{equation}

We say that a $d$-polytope $P$ in {\it $\pi$-general position} if its vertex set is {\it in $\pi$-general position}.
\end{defn}

\begin{rem}\label{d-1}
We can understand property \eqref{gpprop} in a more algebraic way. In particular, when $k = d-1,$ a $(d-1)$-dimensional affine space $H$ has the property that $\pi(H)$ is $(d-1)$-dimensional if and only if the last coordinate of the normal vector of $H$ is nonzero.
\end{rem}

By the alternative definition of lattice-face polytopes in Remark
\ref{dflf2}, it's easy to see that a lattice-face polytope is a
polytope in $\pi$-general position. Therefore, we use rational polytopes in $\pi$-general position as the bridge to connect lattice-face polytopes and general rational polytopes. In fact, the proof of Theorem \ref{main2} follows from the following two propositions.

\begin{prop}\label{reg2gp}
For any finite set $V \subset \Q^d$ with $\aff(V) = \R^d,$ there exists an invertible linear transformation $\phi$ associated to an upper triangular matrix with integer entries and $1$'s on the diagonal such that $\phi(V)$ is a finite set of rational points and is in $\pi$-general position. 

In particular, if $V$ is the vertex set of a rational $d$-polytope, then $\phi(P)$ is a rational polytope in $\pi$-general position.
\end{prop}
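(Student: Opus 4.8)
The plan is to construct $\phi$ as a product of elementary upper-triangular matrices, handling the dimensions $k = d-1, d-2, \dots, 1$ in decreasing order, and to realize the condition \eqref{gpprop} via the genericity of a certain polynomial. First, observe that \eqref{gpprop} needs to be checked only on those subsets $U \subset V$ that are affinely independent: if $\aff(U)$ is $k$-dimensional, pick an affinely independent $U' \subseteq U$ with $\aff(U') = \aff(U)$, and then \eqref{gpprop} for $U$ is equivalent to \eqref{gpprop} for $U'$. So we only have finitely many conditions, one for each affinely independent subset of $V$. For such a $U' = \{v_0, \dots, v_k\}$, the affine space $\aff(U')$ maps onto a $k$-dimensional space under $\pi^{(d-k)}$ precisely when the $(k) \times (k)$ minor formed by the first $k$ coordinates of the difference vectors $v_1 - v_0, \dots, v_k - v_0$ is nonzero, i.e. the projection $\pi^{(d-k)}$ restricted to the linear span of those differences is injective.

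Next I would set up the induction/iteration. Suppose we apply a linear transformation of the form $\phi_t = I + t\, E$, where $E = E_{i,d}$ is the elementary matrix with a single $1$ in position $(i,d)$ for some $i < d$ — this is upper triangular with $1$'s on the diagonal, and $\phi_t$ replaces $x_i$ by $x_i + t x_d$. For a fixed affinely independent $U'$ with $\aff(U')$ of dimension $k$, the relevant minor of $\phi_t(U')$ is a polynomial in $t$; the key point is that it is \emph{not identically zero} in $t$ for a suitable choice of the index $i$ (or, more robustly, we can use a combined transformation mixing several such elementary moves with independent parameters $t_1, \dots, t_{d-1}$, giving a polynomial in several variables that is not identically zero). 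Since the field $\Q$ is infinite and there are only finitely many subsets $U'$ to worry about, the complement of the vanishing loci of these finitely many nonzero polynomials contains a rational point $t \in \Q$ (indeed infinitely many, and we can even clear denominators to arrange integer entries as required, since scaling the off-diagonal entries of an integer upper-triangular unipotent matrix keeps it in the same group up to composing with further such matrices — or more simply, choose $t \in \Z$ large or generic enough). Then $\phi = \phi_t$ (or the product of the elementary pieces) does the job, and rationality of $\phi(V)$ is immediate since $\phi$ has rational — in fact integer — entries and $V \subset \Q^d$. The final sentence of the proposition is then a triviality: an invertible linear map sends the vertex set of a polytope to the vertex set of its image, and the affine-span and projection conditions on the vertex set are exactly what define $\pi$-general position.

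The main obstacle — the step that needs genuine care rather than bookkeeping — is verifying that for each affinely independent subset $U'$ the relevant minor, as a function of the parameters of the unipotent transformation, is not identically zero, and doing so \emph{simultaneously} in a way that a single $\phi$ works for all $U'$ at once. The clean way to handle this is: since $\aff(V) = \R^d$, for any affinely independent $U'$ of dimension $k$ the difference vectors span a $k$-dimensional subspace $W$, and we must show some unipotent upper-triangular $\phi$ makes $\pi^{(d-k)}|_{\phi(W)}$ injective; equivalently $\phi(W)$ meets the $(d-k)$-dimensional coordinate subspace $\{x_1 = \dots = x_k = 0\}$ only at the origin. One shows the set of unipotent upper-triangular $\phi$ for which this \emph{fails} (for this particular $W$) is a proper Zariski-closed subset of the group — it is cut out by the vanishing of a polynomial which is nonzero because, e.g., at $\phi = I$ we may not have injectivity, but a generic shear manifestly rotates $W$ off the bad coordinate subspace (this is where one writes down an explicit $\phi$, or invokes that the orbit of $W$ under the unipotent group is not contained in any proper subvariety). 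Then a finite union of proper Zariski-closed subsets of the (irreducible, rational) group of integer-unipotent matrices cannot cover all its $\Q$-points, so a good $\phi$ exists. I expect the write-up to spend most of its length making this last genericity argument precise and choosing the explicit shears, while everything else is routine.
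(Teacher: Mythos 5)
Your overall strategy --- reduce to finitely many affinely independent subsets $U'$, express \eqref{gpprop} as the nonvanishing of a $k\times k$ minor, and then pick a point of the unipotent upper-triangular group avoiding finitely many hypersurfaces --- is coherent and genuinely different from the paper's argument. But the proposal has a real gap exactly where you flag it: you never prove that, for each affinely independent $U'$ spanning a $k$-dimensional affine space, the relevant minor is a \emph{not-identically-zero} polynomial on the group of unipotent upper-triangular matrices. This is the entire content of the proposition in your formulation, and the justification you sketch is not adequate. In particular, the suggestion to ``invoke that the orbit of $W$ under the unipotent group is not contained in any proper subvariety'' is false as stated: the orbit of a $k$-plane under the upper-triangular unipotent group is contained in a single Schubert cell of the Grassmannian, which is a proper subvariety unless $W$ already lies in the open cell. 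What you actually need is that the orbit is not contained in the specific bad locus $\{W' : \dim \pi^{(d-k)}(W') < k\}$, and that requires an explicit construction (e.g.\ putting a basis of $W$ in column echelon form with pivot rows $i_1 < \cdots < i_k$ and adding suitable multiples of row $i_j$ to row $j$), which is precisely the ``genuine care'' you defer. Until that is written down, the proof is not complete.

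For comparison, the paper avoids this Grassmannian genericity question entirely. It only ever has to enforce the $k=d-1$ case of \eqref{gpprop}, which amounts to choosing a single integer vector $\v$ (the last column of a shear $M_\v$ as in Lemma \ref{findM}) with $\v\cdot\n_i\neq 0$ for the finitely many normal vectors $\n_i$ of the hyperplanes spanned by subsets of $V$ --- a trivial avoidance of finitely many hyperplanes by a lattice point. The key structural input is Lemma \ref{gpd-1}: once the top-dimensional condition holds for all subsets, the single projection $\pi$ automatically preserves the dimension of \emph{every} lower-dimensional affine span, so the remaining conditions are pushed down to $\pi(V)\subset\R^{d-1}$ and handled by induction on $d$, with Lemma \ref{findM}/(ii) ensuring the inductive transformation $\diag(A,1)$ does not destroy the condition already achieved in the top dimension. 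Your one-shot approach would be a legitimate alternative and arguably more uniform, but as written it trades the paper's easy genericity for a harder one and then omits the proof of the harder one.
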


\begin{prop}\label{gp2lf}
Suppose $V \subset \Q^d$ is a finite set in $\pi$-general position. Then there exist nonzero integers $c_1, \dots, c_d,$ such that for any $U \subset V$ with $\aff(U) = \R^d,$ we have that $\phi(\conv(U))$ is a lattice-face polytope, where $\phi$ is the invertible linear transformation associated with the diagonal matrix $\diag(c_1, \dots, c_d)$.

In particular, if $V$ is the vertex set of $P$ a rational $d$-polytope in $\pi$-general position, then $\phi(P)$ is a lattice-face polytope.
\end{prop}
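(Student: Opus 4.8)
The plan is to verify that $\phi(\conv(U))$ satisfies the flat characterization of lattice-face polytopes from Remark~\ref{dflf2}. Since $\phi$ is linear and invertible, the vertex set of $\phi(\conv(U))$ is contained in $\phi(U)\subseteq\phi(V)$, and $\aff(\phi(U'))=\phi(\aff(U'))$ for every $U'\subseteq V$; so by Remark~\ref{dflf2} it suffices to choose $c_1,\dots,c_d$ so that
\[
\pi^{(d-k)}\big(\L(\phi(\aff(U')))\big)=\Z^k
\qquad\text{whenever } U'\subseteq V,\ \dim\aff(U')=k,\ 0\le k\le d-1.
\]
There are only finitely many such subsets $U'$, so the $c_i$ can be chosen to handle all of them at once, and the same $c_i$ then work for every $U$.

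Fix such a $U'$ and put $H=\aff(U')$, $k=\dim H$. Because $V$ is in $\pi$-general position and $U'\subseteq V$, the image $\pi^{(d-k)}(H)$ is $k$-dimensional; since $\pi^{(d-k)}$ is affine-linear and $H$ is $k$-dimensional, $\pi^{(d-k)}|_H$ is an affine isomorphism $H\risom\R^k$ whose inverse is rational because $H$ is a rational affine subspace. Hence $H=\{(t,g(t)):t\in\R^k\}$ with $g(t)=At+b$ for a rational $(d-k)\times k$ matrix $A$ and $b\in\Q^{d-k}$ (the first $k$ coordinates being the free ones, to match $\pi^{(d-k)}$). Writing $\phi=\diag(c_1,\dots,c_d)$ and substituting $s=\diag(c_1,\dots,c_k)t$, one computes
\[
\phi(H)=\{(s,\tilde A s+\tilde b):s\in\R^k\},\qquad
\tilde A=\diag(c_{k+1},\dots,c_d)\,A\,\diag(c_1^{-1},\dots,c_k^{-1}),\quad
\tilde b=\diag(c_{k+1},\dots,c_d)\,b.
\]
A point of $\phi(H)$ is a lattice point exactly when its first $k$ coordinates $s$ lie in $\Z^k$ and $\tilde A s+\tilde b\in\Z^{d-k}$, so $\pi^{(d-k)}(\L(\phi(H)))=\{s\in\Z^k:\tilde A s+\tilde b\in\Z^{d-k}\}$, which equals $\Z^k$ precisely when $\tilde A$ and $\tilde b$ are integral.

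It then remains to pick the $c_i$ making all these finitely many $\tilde A$'s and $\tilde b$'s integral simultaneously. Let $D$ be a positive integer with $DA$ and $Db$ integral for every $H=\aff(U')$ arising above (applied to singletons this also forces $Dv\in\Z^d$ for all $v\in V$), and set $c_j=D^j$, so $\phi=\diag(D,D^2,\dots,D^d)$ is invertible with nonzero integer diagonal. Then the $(i,j)$ entry of $\tilde A$ is $D^{k+i-j}A_{ij}$; since $1\le i\le d-k$ and $1\le j\le k$ we have $k+i-j\ge 1$, so $D^{k+i-j}A_{ij}=D^{k+i-j-1}(DA_{ij})\in\Z$, and likewise the $i$-th entry of $\tilde b$ is $D^{k+i}b_i=D^{k+i-1}(Db_i)\in\Z$. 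Hence $\pi^{(d-k)}(\L(\phi(\aff(U'))))=\Z^k$ for every $U'\subseteq V$, so by Remark~\ref{dflf2}, $\phi(\conv(U))$ is a lattice-face polytope for every $U\subseteq V$ with $\aff(U)=\R^d$; taking $V$ to be the vertex set of $P$ gives the final assertion. I expect the main obstacle to be the middle step: extracting the rational graph function $g$ with denominators bounded uniformly over all the $U'$ (this is exactly where $\pi$-general position is used) and, more importantly, tracking which scaling factor $c_i$ governs which coordinate as $k$ ranges over $0,\dots,d-1$ — it is this dependence that forces the geometric choice $c_j=D^j$ rather than a single common multiple.
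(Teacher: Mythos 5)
Your proof is correct, but it takes a genuinely different route from the paper. The paper argues by induction on $d$: it first shows (Lemma \ref{piV}) that $\pi(V)$ is again in $\pi$-general position, gets $c_1,\dots,c_{d-1}$ from the inductive hypothesis, and then produces $c_d$ as a product of integers $c_U$ supplied by Lemma \ref{findc}, one for each $(d-1)$-dimensional flat $\aff(U)$, where $c_U$ is read off from the last coefficient $\alpha_d$ of a defining equation of $\aff(U)$; the lattice-face property is then verified directly against Definition \ref{dflf1} (condition a) by induction, condition b) by Lemma \ref{findc}). You instead work non-inductively with the flat characterization of Remark \ref{dflf2}: for each flat $H=\aff(U')$ of dimension $k$, $\pi$-general position lets you write $H$ as the graph of a rational affine map over the first $k$ coordinates, and then the single explicit choice $c_j=D^j$ (with $D$ a common denominator for the finitely many graph representations, including the vertices themselves at $k=0$) makes $\tilde A$ and $\tilde b$ integral because the exponent $k+i-j$ is always at least $1$; this verifies $\pi^{(d-k)}(\L(\phi(\aff(U'))))=\Z^k$ for all flats at once, and hence applies to every subset of the vertex set of $\phi(\conv(U))$ since that vertex set sits inside $\phi(V)$. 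What your approach buys is a fully explicit, one-shot scaling matrix $\diag(D,D^2,\dots,D^d)$ and no induction or auxiliary lemmas about projections of $\pi$-general-position sets; what it costs is reliance on the equivalence stated (without proof) in Remark \ref{dflf2}, whereas the paper's induction checks Definition \ref{dflf1} directly and only ever needs to control the top-dimensional flats at each stage. The uniformity issue you flag at the end is not a real obstacle, exactly as you suspect: finiteness of $V$ gives finitely many flats, and your exponent bookkeeping already resolves the question of which $c_i$ governs which coordinate.
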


We prove these propositions in two subsections below. Each proof is preceded by a pair of lemmas. It is easy to check that the operator $\aff$ commutes with any linear transformation. We will use this fact often in the proofs.

\subsection{Proof of Proposition \ref{reg2gp}}
\begin{lem}\label{gpd-1}
Suppose $V$ is a finite set in $\R^d$ such that $\aff(V) = \R^d$ and for $k = d-1$ and any $U \subset V$, \eqref{gpprop} is satisfied. Then for any $k < d-1,$ and any $U \subset V$ spanning a $k$-dimensional affine space, we have that $\pi(\aff(U))$ is $k$-dimensional.
\end{lem}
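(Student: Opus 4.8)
Suppose $V$ is a finite set in $\R^d$ such that $\aff(V) = \R^d$ and for $k = d-1$ and any $U \subset V$, \eqref{gpprop} is satisfied. Then for any $k < d-1,$ and any $U \subset V$ spanning a $k$-dimensional affine space, $\pi(\aff(U))$ is $k$-dimensional.

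The plan is to reduce the statement about a $k$-dimensional affine hull $\aff(U)$ with $k < d-1$ to the already-assumed case $k = d-1$ by enlarging $U$ to a subset spanning a hyperplane while keeping track of how $\pi$ acts. First I would fix $U \subset V$ with $\aff(U)$ of dimension $k < d-1$, and note that $\pi(\aff(U))$ is always at most $k$-dimensional since $\pi$ is linear; the content is the reverse inequality. Suppose for contradiction that $\dim \pi(\aff(U)) < k$, i.e. $\pi$ restricted to the direction space $W$ of $\aff(U)$ has a nontrivial kernel, so $W$ contains the vertical line direction $e_d = (0,\dots,0,1)$.

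Next I would extend $U$ to a subset $U'\subset V$ with $\aff(U')$ a hyperplane (dimension $d-1$): since $\aff(V) = \R^d$, we can successively add vertices of $V$ to $U$, each strictly increasing the dimension of the affine hull, until we reach dimension $d-1$; call the result $U'$. Then $\aff(U) \subset \aff(U')$, so the direction space of $\aff(U')$ still contains $e_d$. But then $\pi(\aff(U'))$ has dimension $\le (d-1) - 1 = d-2 < d-1$, contradicting the hypothesis that \eqref{gpprop} holds for $k = d-1$ and the subset $U'$. Hence no such $U$ exists, and $\pi(\aff(U))$ is $k$-dimensional for all $U$ spanning a $k$-dimensional affine space.

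The only step that needs slight care — and what I'd flag as the main (minor) obstacle — is the bookkeeping that "$\dim\pi(\aff(U)) < \dim \aff(U)$" is equivalent to "the direction space of $\aff(U)$ contains $e_d$", and that this property is inherited when passing from $\aff(U)$ to the larger hyperplane $\aff(U')$. This is just linear algebra: the kernel of $\pi$ on $\R^d$ is exactly $\R e_d$, so $\pi$ drops the dimension of a linear subspace $W$ precisely when $e_d \in W$; and if $W \subseteq W'$ with $e_d \in W$ then $e_d \in W'$, so $\pi$ drops the dimension of $W'$ as well. Everything else (the existence of the chain of vertices extending $U$ to a hyperplane-spanning set, using $\aff(V) = \R^d$) is a routine matroid-style argument. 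I would write this up in a few lines, invoking Remark \ref{d-1} for the "direction contains $e_d$" reformulation in the hyperplane case if convenient.
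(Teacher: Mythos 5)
Your proof is correct and follows essentially the same route as the paper: both arguments extend $U$ (using $\aff(V)=\R^d$) to a subset of $V$ spanning a $(d-1)$-dimensional affine space and then invoke the hypothesis \eqref{gpprop} for $k=d-1$. The only difference is in the final step and is cosmetic: the paper concludes with a direct dimension count (the projected enlarged set spans dimension $d-1$ while at most $d-1-k$ points were added, so $\dim\aff(\pi(U))\ge k$), whereas you argue the contrapositive by observing that a dimension drop under $\pi$ forces $e_d$ into the direction space of $\aff(U)$ and hence of the larger hyperplane, contradicting the $k=d-1$ case -- both are sound.
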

\begin{proof}
Because $\aff(V) = \R^d,$ there exist $v_1, \dots, v_{d-1-k} \in V \setminus U,$ such that $\aff(U \cup \{ v_1, \dots, v_{d-k} \})$ is $(d-1)$-dimensional. Let $\tilde{U} = U \cup \{ v_1, \dots, v_{d-1-k}\}.$ Then we have that $\aff(\pi(\tilde{U})) = \pi(\aff(\tilde{U}))$ has dimension $d-1.$ However, the number of elements in $\pi(\tilde{U}) \setminus \pi(U)$ is at most $d-1-k.$ Hence, the dimension of $\pi(\aff(U)) = \aff(\pi(U))$ is at least $k.$ Since the dimension of $\aff(U)$ is $k,$ we must have that $\pi(\aff(U))$ is $k$-dimensional.  
\end{proof}

\begin{lem}\label{findM}
Suppose $d \ge 2$ and $H$ is a $(d-1)$-dimensional affine space in $\R^d$ with normal vector $\n.$ 
\begin{ilist}
\itm Suppose $\v = (v_1, \dots, v_{d})^T$ is a vector that is not in the null space of $\n^T,$ i.e., $\v \cdot \n \neq 0,$ and $v_d = 1.$ Let $M_{\v}$ be the the $d \times d$ upper triangular matrix with $1$'s on the diagonal, $-v_i$ the $(i, d)$-entry for $1 \le i \le d,$ and $0$'s elsewhere. Then $\pi(M_\v(H))$ is $(d-1)$-dimensional.
\itm Suppose $A$ is a $(d-1) \times (d-1)$ invertible matrix. Let $M_A$ be the $d \times d$ block diagonal matrix $\diag(A, 1).$ Then $\pi(H)$ is $(d-1)$-dimensional if and only if $\pi(M_A(H))$ is $(d-1)$-dimensional.
\end{ilist}
\end{lem}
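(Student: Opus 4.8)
\textbf{Proof proposal for Lemma \ref{findM}.}

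The plan is to handle the two parts separately, reducing each to an explicit computation with normal vectors, using Remark \ref{d-1}: a $(d-1)$-dimensional affine space $H$ has $\pi(H)$ of dimension $d-1$ if and only if the last coordinate of the normal vector of $H$ is nonzero. So throughout, I will track what happens to the normal vector $\n$ of $H$ under the linear transformations $M_\v$ and $M_A$.

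For part (i), I first recall how normal vectors transform under an invertible linear map: if $H = \{\x : \n^T \x = c\}$, then $M(H) = \{\x : \n^T M^{-1} \x = c\}$, so the normal vector of $M(H)$ is $(M^{-1})^T \n$, up to scalar. Applying this with $M = M_\v$: since $M_\v$ is upper triangular with $1$'s on the diagonal, $M_\v^{-1}$ is the upper triangular matrix with $1$'s on the diagonal and $+v_i$ in the $(i,d)$-entry for $1 \le i \le d-1$ (note the $(d,d)$-entry of $M_\v$ is $1$ since $v_d = 1$, so $-v_d = -1$ there — I will need to check the exact form of $M_\v$ and its inverse carefully here, as the $(d,d)$-entry being $1 - v_d = 0$ would be a problem; presumably the intended matrix has $1$ on the diagonal including position $(d,d)$ and $-v_i$ only for $i < d$, with the hypothesis $v_d = 1$ a normalization). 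Then $(M_\v^{-1})^T \n$ has last coordinate equal to $\sum_{i=1}^{d} v_i n_i = \v \cdot \n$ (using $v_d = 1$), which is nonzero by hypothesis. Hence $\pi(M_\v(H))$ is $(d-1)$-dimensional. I should also note $M_\v$ is invertible (determinant $1$), so $M_\v(H)$ is genuinely $(d-1)$-dimensional.

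For part (ii), with $M_A = \diag(A, 1)$, I compute $(M_A^{-1})^T = \diag((A^{-1})^T, 1) = \diag((A^T)^{-1}, 1)$. Writing $\n = (\n', n_d)^T$ with $\n' \in \R^{d-1}$, the normal vector of $M_A(H)$ is $((A^T)^{-1}\n', n_d)^T$, whose last coordinate is exactly $n_d$, unchanged. So by Remark \ref{d-1}, $\pi(M_A(H))$ is $(d-1)$-dimensional if and only if $n_d \ne 0$ if and only if $\pi(H)$ is $(d-1)$-dimensional; again $M_A$ is invertible since $A$ is, so dimensions are preserved. The main obstacle is purely bookkeeping: getting the precise entries of $M_\v$ and $M_\v^{-1}$ right so that the last coordinate of the transformed normal vector comes out to $\v \cdot \n$ rather than something off by a sign or a missing term — the conceptual content (normals transform by the inverse transpose, and the last row of $\diag(A,1)^{-T}$ is untouched) is routine.
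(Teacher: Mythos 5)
Your proposal is correct and follows essentially the same route as the paper: normals transform by the inverse transpose, $(M_\v^{-1})^T$ has last row $\v^T$ so the last coordinate of the new normal is $\v\cdot\n\neq 0$, and $(M_A^{-1})^T=\diag((A^{-1})^T,1)$ leaves the last coordinate of $\n$ unchanged, with Remark \ref{d-1} finishing both parts. Your resolution of the $(d,d)$-entry ambiguity (diagonal entry $1$, with $v_d=1$ serving only as a normalization so that the last row of $(M_\v^{-1})^T$ equals $\v^T$) is exactly the paper's intended reading, as confirmed by its use of $M_\v$ as a unitriangular integer matrix in Proposition \ref{reg2gp}.
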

\begin{proof}
There exists $a \in \R,$ such that $H = \{ \x \ | \ \n \cdot \x = a \}.$ For any invertible matrix $M,$ it is easy to check that $M(H)$ is the $(d-1)$-dimensional affine space $\{ \x \ | \ ((M^{-1})^T \n) \cdot \x = a \}.$ Hence, the normal vector of $M(H)$ is $(M^{-1})^T \n.$
\begin{ilist}
\itm By Remark \ref{d-1}, $\pi(M_\v(H))$ is $(d-1)$-dimensional if and only if the last coordinate of the normal vector of $M_\v(H)$ is nonzero. However, the normal vector of $M_\v(H)$ is $(M_{\v}^{-1})^T \n.$ One can verify that $(M_{\v}^{-1})^T$ is the $d \times d$ lower triangular matrix matrix with $1$'s on the diagonal, $v_i$ the $(d,i)$-entry for $1 \le i \le d,$ and $0$'s elsewhere. In particular, the last row of $(M_{\v}^{-1})^T$ is $\v^T.$ Therefore, the last coordinate of $(M_{\v}^{-1})^T \n$ is $\v^T \n = \v \cdot \n \neq 0.$

\itm $(M_A^{-1})^T$ is the block diagonal matrix $\diag((A^{-1})^T, 1).$ One checks that the last coordinate of $(M_A^{-1})^T \cdot \n,$ the normal vector of $M_A(H)$, is exactly the last coordinate of $\n,$ the normal vector of $H.$ Therefore, by Remark \ref{d-1}, $\pi(H)$ is $(d-1)$-dimensional if and only if $\pi(M_A(H))$ is $(d-1)$-dimensional.
\end{ilist}
\end{proof}

\begin{proof}[Proof of Proposition \ref{reg2gp}]
We prove the proposition by induction on $d.$ When $d=1,$ any finite set $V \in \Q^d$ is in $\pi$-general position by definition. Assume $d \ge 2$ and the proposition holds when the dimension is smaller than d. Since $V$ is a finite set, there are finitely many $(d-1)$-dimensional affine spaces spanned by subsets of $V.$ Suppose they are $H_1, \dots, H_\ell$ with normal vectors $\n_1, \dots, \n_\ell,$ respectively. For each $i: 1 \le i \le \ell,$ the null space of $\n_i^T$ is a $(d-1)$-dimensional subspace in $\R^d.$ Let $H_0$ the the set of points in $\R^d$ with the last coordinate equal to $1$. The set $\H = \{ H_i \cap H_0 \neq \emptyset : 1 \le i \le \ell \}$ is a finite set of $(d-2)$-dimensional affine spaces inside $H_0.$ One sees easily that the complement (with respect to $H_0$) of the union over $\H$ contains lattice points. Therefore,
there exists a vector $\v \in \Z^d \cap H_0$ such that $\v \not\in H_i,$ for each $i: 1 \le i \le \ell.$ We pick such a $\v$, then we have that $\v \cdot \n_i \neq 0$ for each i, and the last coordinate of $\v$ is $1$. Let $\phi_0$ be the invertible linear transformation associated to $M_\v,$ where $M_\v$ is the matrix described in Lemma \ref{findM}/(i). By Lemma \ref{findM}/(i), we have that $\pi(\phi_0(H_i))$ is $(d-1)$-dimensional for any $i: 1 \le i \le \ell.$

Let $\overline{V} = \phi_0(V).$ Because $\pi(\overline{V}) \subset \Q^{d-1},$ by the induction hypothesis, there exists an upper triangular matrix $A$ with integer entries and $1$'s on the diagonal, such that $A(\pi(\overline{V}))$ is a finite set of rational points and is in $\pi$-general position. Let $\psi$ be the invertible linear transformation associated to the block diagonal matrix $\diag(A, 1).$ It is easy to see that $A \circ \pi = \pi \circ \psi.$ Let $\phi = \psi \circ \phi_0.$ Because both $\psi$ and $\phi_0$ are upper triangular matrix with integer entries and $1$'s on the diagonal, $\phi$ is such a matrix too. It is clear that $\phi(V)  = \psi(\overline{V})$ is a finite set of rational points. We will show that $\phi(V) = \psi(\overline{V})$ is in $\pi$-general position.

Since both of $\psi$ and $\phi_0$ are invertible, $\phi$ is invertible as well. Therefore, $\aff(\phi(V)) = \aff(V) = \R^d.$ We only need to show that for any subset $U \subset \phi(V),$ \eqref{gpprop} holds for any $k: 0 \le k \le d-1.$ 


Suppose $\aff(U)$ is $(d-1)$-dimensional. Since $\phi$ is invertible, we have that $\aff(\phi^{-1}(U))$ is $(d-1)$-dimensional. We know that $\aff(\phi^{-1}(U)) = H_i,$ for some $i = 1, \dots, \ell.$ As discussed above, $\pi(\phi_0(H_i))$ is $(d-1)$-dimensional. However, by Lemma \ref{findM}/(ii), we also have $\pi(\phi_0(H_i))$ is $(d-1)$-dimensional if and only if $\pi(\psi(\phi_0(H_i)))$ is $(d-1)$-dimensional. Thus, we conclude that $\pi(\aff(U)) = \pi(\aff(\phi(\phi^{-1}(U)))) = \pi(\phi(\aff(\phi^{-1}(U)))) = \pi(\phi(H_i)) = \pi(\psi(\phi_0(H_i)))$ is $(d-1)$-dimensional.

Suppose $\aff(U)$ is $k$-dimensional, where $0 \le k < d-2.$ Let $\overline{U} =\psi^{-1}(U).$ Since $\psi$ is invertible, we have that $\aff(\overline{U})$ is $k$-dimensional. Note that $\overline{U}$ is a subset of $\overline{V} = \phi_0(V).$ However, by the construction of $\phi_0,$ we know that $\overline{V}$ is a set satisfying the hypothesis in Lemma \ref{gpd-1}. Therefore, by Lemma \ref{gpd-1}, we have that $\aff(\pi(\overline{U})) = \pi(\aff(\overline{U}))$ is $k$-dimensional. Because $A$ is invertible, $\aff(A(\pi(\overline{U})))$ is $k$-dimensional. But $A(\pi(\overline{U}))$ is a subset of $A(\pi(\overline{V})),$ which is in $\pi$-general position (in $\R^{d-1}$). Thus, $\pi^{(d-1-k)}(\aff(A(\pi(\overline{U}))))$ is $k$-dimensional. Therefore, the dimension of $\pi^{(d-k)}(\aff(U)) = \pi^{(d-1-k)}(\aff(\pi(U))) = \pi^{(d-1-k)}(\aff(\pi(\psi(\overline{U})))) = \pi^{(d-1-k)}(\aff(A(\pi(\overline{U}))))$ is $k.$
\end{proof}

\subsection{Proof of Proposition \ref{gp2lf}}
\begin{lem}\label{findc}
Suppose $U \subset \Q^d$ such that both $\aff(U)$ and $\pi(\aff(U))$ are $(d-1)$-dimensional. Then there exists a nonzero integer $c_U$ such that for any $c$ a nonzero multiple of $c_U,$ 
we have that $$\pi(\L(\aff(\phi_c(U)))) = \Z^{d-1},$$
where $\phi_c$ is the invertible linear transformation associated with the diagonal matrix $\diag(1, \dots, 1, c)$.
\end{lem}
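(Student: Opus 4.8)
The plan is to make the hyperplane $\aff(U)$ completely explicit, transport its equation through $\phi_c$, and then simply solve for the last coordinate. First I would use that $U \subset \Q^d$ and $\dim\aff(U) = d-1$ to write $\aff(U) = \{\x \in \R^d : a_1 x_1 + \cdots + a_d x_d = b\}$ where, after clearing denominators in a rational normal vector and constant, $a_1, \dots, a_d, b$ are integers with $(a_1, \dots, a_d) \neq (0, \dots, 0)$. Since $\pi(\aff(U))$ is assumed to be $(d-1)$-dimensional, Remark \ref{d-1} forces $a_d \neq 0$. I claim that $c_U := a_d$ does the job.

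Next, fix a nonzero multiple $c$ of $c_U$. Because $\aff$ commutes with linear transformations, $\aff(\phi_c(U)) = \phi_c(\aff(U))$, and by the computation used in the proof of Lemma \ref{findM} (the image of $\{\x : \n \cdot \x = b\}$ under an invertible $M$ is $\{\x : ((M^{-1})^T\n) \cdot \x = b\}$), applied here to $M = \diag(1, \dots, 1, c)$, one gets $\phi_c(\aff(U)) = \{\x : a_1 x_1 + \cdots + a_{d-1} x_{d-1} + (a_d/c) x_d = b\}$; clearing the factor $1/c$, this is $\{\x : c a_1 x_1 + \cdots + c a_{d-1} x_{d-1} + a_d x_d = c b\}$.

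Finally I would verify $\pi(\L(\aff(\phi_c(U)))) = \Z^{d-1}$. The inclusion $\subseteq$ is immediate since $\pi(\Z^d) = \Z^{d-1}$. For $\supseteq$, take any $\y = (y_1, \dots, y_{d-1}) \in \Z^{d-1}$ and set $z := \frac{c}{a_d}\bigl(b - \sum_{i=1}^{d-1} a_i y_i\bigr)$; since $c$ is a multiple of $a_d$ and $b - \sum_{i} a_i y_i \in \Z$, we have $z \in \Z$, and by the displayed equation $(\y, z) \in \aff(\phi_c(U)) \cap \Z^d = \L(\aff(\phi_c(U)))$ with $\pi(\y, z) = \y$. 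Hence every point of $\Z^{d-1}$ is hit, and equality holds.

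I do not expect a genuine obstacle here: the argument is a direct computation once one has the integer equation of the hyperplane. The only point needing care is correctly propagating the defining equation through the scaling $\phi_c$ — noticing that only the last coefficient picks up the factor $1/c$, so that multiplying through by $c$ restores integrality of the coefficients while multiplying the constant term by $c$ — which is exactly Lemma \ref{findM} specialized to a diagonal matrix. The hypothesis that $\pi(\aff(U))$ is $(d-1)$-dimensional is used essentially, precisely to guarantee $a_d \neq 0$; without it, no scaling of the last coordinate could make the projection of the lattice surjective.
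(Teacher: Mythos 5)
Your proof is correct and follows essentially the same route as the paper: write $\aff(U)$ as an integer equation $a_1x_1+\cdots+a_dx_d=b$, use Remark \ref{d-1} to get $a_d\neq 0$, take $c_U=a_d$, push the equation through $\phi_c$, and solve for the last coordinate to see every $\y\in\Z^{d-1}$ lifts to a lattice point of the transformed hyperplane. The only cosmetic differences are that you spell out the trivial inclusion $\subseteq$ and cite the computation from Lemma \ref{findM}, which the paper does implicitly.
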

\begin{proof}
Since $U$ is a set of points in $\Q^d$ and $\aff(U)$ is $(d-1)$-dimensional, $\aff(U)$ can be described by a linear equation:  $\alpha_1 x_1 + \cdots \alpha_d x_d = a,$ for some integers $\alpha_1, \dots, \alpha_d, a.$ Because $\pi(\aff(U))$ is $(d-1)$-dimensional, by Remark \ref{d-1}, we must have that $\alpha_d \neq 0.$ Let $c_U = \alpha_d.$ If $c = k c_U,$ for some nonzero $k \in \Z,$ then $\aff(\phi_c(U))$ is the $(d-1)$-dimensional affine space defined by the equation $\alpha_1 x_1 + \cdots \alpha_{d-1} x_{d-1} + \frac{1}{k} x_d = a.$ For any lattice point $\y = (y_1, \dots, y_{d-1}) \in \Z^{d-1},$ the intersection of $\aff(\phi_c(U))$ with the inverse image of $\y$ under $\pi$ is the point $(y_1, \dots, y_{d-1}, k (a - (\alpha_1 x_1 + \cdots \alpha_{d-1} x_{d-1} ))),$ which is a lattice point. Therefore, $\pi(\L(\aff(\phi_c(U)))) = \Z^{d-1}.$
\end{proof}

\begin{lem}\label{piV}
Suppose $d \ge 2.$ For any finite set $V \subset \R^d$ in $\pi$-, the finite set $\pi(V) \subset \R^{d-1}$ is in $\pi$-general position.
\end{lem}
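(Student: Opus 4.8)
The plan is to check the two defining conditions of $\pi$-general position for $\pi(V) \subset \R^{d-1}$ directly, pulling each one back to the corresponding property of $V$. Throughout I will use that $\aff$ commutes with linear maps (so $\aff(\pi(S)) = \pi(\aff(S))$) and that a composite of coordinate-forgetting projections is again one, $\pi^{(a)} \circ \pi^{(b)} = \pi^{(a+b)}$.

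The spanning condition is immediate: $\aff(\pi(V)) = \pi(\aff(V)) = \pi(\R^d) = \R^{d-1}$.

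For the main condition, fix $k$ with $0 \le k \le d-2$ and a subset $\overline{U} \subseteq \pi(V)$ with $\aff(\overline{U})$ of dimension $k$; the goal is to show $\pi^{(d-1-k)}(\aff(\overline{U}))$ is $k$-dimensional. First I would lift $\overline{U}$ to the full preimage $U := \pi^{-1}(\overline{U}) \cap V \subseteq V$, which satisfies $\pi(U) = \overline{U}$ and hence $\aff(\overline{U}) = \pi(\aff(U))$. Setting $j := \dim \aff(U)$, the fibres of $\pi$ being one-dimensional forces $k \le j \le k+1$, and in particular $j \le k+1 \le d-1$, so $U$ is an admissible subset for the $\pi$-general position hypothesis on $V$. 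Applying that hypothesis to $U$ gives that $\pi^{(d-j)}(\aff(U))$ is $j$-dimensional; since it is a $j$-dimensional affine subspace of $\R^{j}$, it is all of $\R^{j}$. Then
$$\pi^{(d-1-k)}\bigl(\aff(\overline{U})\bigr) \;=\; \pi^{(d-k)}\bigl(\aff(U)\bigr) \;=\; \pi^{(j-k)}\bigl(\pi^{(d-j)}(\aff(U))\bigr) \;=\; \pi^{(j-k)}(\R^{j}) \;=\; \R^{k},$$
which is $k$-dimensional, as required. Here $j - k \ge 0$ and $d - j \ge 0$, so all the projections and the identities $\pi^{(d-k)} = \pi^{(j-k)}\circ\pi^{(d-j)}$ and $\pi^{(d-1-k)}\circ\pi = \pi^{(d-k)}$ make sense.

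I do not expect a genuine obstacle here: the argument is essentially bookkeeping with dimensions of projections. The one place that needs a little care is choosing the lift of $\overline{U}$ correctly — one should take the entire preimage $\pi^{-1}(\overline{U})\cap V$ (so that $\pi(U) = \overline{U}$ exactly) rather than picking one point over each element of $\overline{U}$ — and then noticing that, whichever of the two possible values $k$ or $k+1$ the dimension $j$ of $\aff(U)$ takes, the hypothesis on $V$ already makes $\pi^{(d-j)}(\aff(U))$ fill up $\R^{j}$, which is exactly what forces the remaining projection down to dimension $k$.
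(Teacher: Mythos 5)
Your proof is correct, and it follows the same overall strategy as the paper: lift the subset $\overline{U}$ of $\pi(V)$ to a subset $U$ of $V$ with $\pi(U)=\overline{U}$ and transfer the problem to the $\pi$-general position hypothesis on $V$. The one place you diverge is how the possible dimension jump of $\aff(U)$ is handled. The paper argues by contradiction that $\dim\aff(U)$ must equal $k$: if it were larger, one could choose $W\subset U$ with $\dim\aff(W)=k+1$, and the hypothesis on $V$ would force $\pi(\aff(W))$ to be $(k+1)$-dimensional inside the $k$-dimensional $\aff(\overline{U})$, a contradiction; it then applies the hypothesis to $U$ with dimension $k$. You instead bound $j=\dim\aff(U)$ by $k\le j\le k+1$ (via $\aff(U)\subset\pi^{-1}(\aff(\overline{U}))$) and run a uniform computation $\pi^{(d-1-k)}(\aff(\overline{U}))=\pi^{(j-k)}\bigl(\pi^{(d-j)}(\aff(U))\bigr)=\pi^{(j-k)}(\R^{j})=\R^{k}$, which works for either value of $j$ (the case $j=k+1$ is in fact impossible, but your argument never needs to know that). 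This buys a slightly shorter proof with no case-by-case contradiction; the paper's version has the small side benefit of making explicit that the lifted set actually spans a $k$-dimensional space. Your direct computation $\aff(\pi(V))=\pi(\R^{d})=\R^{d-1}$ for the spanning condition is also fine and a bit more economical than the paper's.
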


\begin{proof}
Because $\aff(V) = \R^d,$ there exists $U \subset V$ such that $\aff(U)$ is $(d-1)$-dimensional. Thus, $\pi(\aff(U))$ is $(d-1)$-dimensional. Hence, $\aff(\pi(V)) = \pi(\aff(V)) \supset \pi(\aff(U))$ has to be $(d-1)$-dimensional. 

For any $k: 0 \le k \le d-2,$ let $U'$ be a subset of $\pi(V)$ that spans a $k$-dimensional affine space. There exists $U \subset V$ such that $\pi(U) = U'.$ We need to show that $\pi^{(d-1-k)}(\aff(U')) = \pi^{(d-1-k)}(\aff(\pi(U))) = \pi^{(d-k)}(\aff(U))$ is $k$-dimensional.  Since $V$ is in $\pi$-general position, it is enough to show that $\aff(U)$ is $k$-dimensional. Suppose $\dim(\aff(U)) \neq  k.$ Given that $\aff(U') = \aff(\pi(U)) = \pi(\aff(U))$ is $k$-dimensional, we must have that $\dim(\aff(U)) \ge k+1.$ There exists $W \subset U$ such that $\dim(\aff(W)) = k+1.$ Note that $k+1 \le d-1.$ Because $V$ is in $\pi$-general position, $\pi^{(d-(k+1))}(\aff(W))$ is $(k+1)$-dimensional. This implies that $\dim(\pi^{(i)}(\aff(W))) = k+1,$ for any $i$ with $0 \le i \le d-(k+1).$ In particular, $\pi(\aff(W)) = \aff(\pi(W))$ has dimension $k+1.$ However, $\aff(U') = \aff(\pi(U)) \supset \aff(\pi(W)),$ but $\dim(\aff(U')) = k < \aff(\pi(W)).$ This is a contradiction. 

\end{proof}

\begin{proof}[Proof of Proposition \ref{gp2lf}]
We prove the proposition by induction on $d.$ When $d=1,$ $V = \{v_1, \dots, v_n\}$ where each $v_i$ is a rational number, thus can be written as $v_i = \frac{p_i}{q_i}$ for some integers $p_i$ and $q_i \neq 0.$ Let $\phi = \prod_{i=1}^n q_i.$ Clearly, $\phi(V)$ is a set of integers. For any $U \subset V$ with $\aff(U) = \R,$ we have that $\phi(\conv(U))$ is an integral polytope, thus is a lattice-face polytope.

Assume $d \ge 2$ and the lemma is true when the dimension (of $\aff(V)$) is smaller than $d.$ By Lemma \ref{piV}, $\pi(V) \subset \Q^{d-1}$ is in $\pi$-general position in $\R^{d-1}$. Thus, there exists nonzero integers $c_1, \dots, c_{d-1},$ such that  for any $U' \subset \pi(V)$ with $\aff(U') = \R^{d-1},$ we have that $\conv(\psi(U'))$ is a lattice-face polytope, where $\psi=\diag(c_1, \dots, c_{d-1})$. 
Let $\nTilde{\psi} = \diag(c_1, \dots, c_{d-1}, 1)$ be the linear transformation in $\R^d$ corresponding to $\psi.$ Let $U \subset V$ such that $\aff(U)$ is $(d-1)$-dimensional. Because $V$ is in $\pi$-general position, we have that $\pi(\aff(U))$ is $(d-1)$-dimensional. Since $\psi$ and $\nTilde{\psi}$ are invertible and $\pi \circ \tilde{\psi} = \psi \circ \pi$, $\aff(\nTilde{\psi}(U)) = \nTilde{\psi}(\aff(U))$ and $\pi(\aff(\nTilde{\psi}(U))) = \psi(\pi(\aff(U)))$ are $(d-1)$-dimensional. Note that $\nTilde{\psi}(U)$ are still in $\Q^d.$ Thus, by Lemma \ref{findc}, there exists a nonzero integer $c_U$ such that for any $c$ a nonzero multiple of $c_U,$ we have that $\pi(\L(\aff(\phi_c(\nTilde{\psi}(U))))) = \Z^{d-1},$
where $\phi_c = \diag(1, \dots, 1, c)$. Let 
$$c_d = \prod_{\mbox{\small $U \subset V: \aff(U)$  is $(d-1)$-dimensional}} c_U.$$
We claim the linear transformation $$\phi = \diag(1, \dots, 1, c_d) \circ \nTilde{\psi} = \diag(c_1, \dots, c_{d-1}, c_d)$$ has the desired property.
For any $U \subset V$ with $\aff(U) = \R^d$, we need to check that $P_U = \phi(\conv(U)) = \conv(\phi(U))$ is a lattice-face polytope. It is enough to check the case when $\phi(U)$ is the vertex set of $P_U.$ We will show this by checking the definition of lattice-face polytopes. For any subset $W \subset \phi(U)$ spanning a $(d-1)$ dimensional affine space, we need to show that 
$$\mbox{ a) $\pi(\conv(W))$ is a lattice-face polytope, and b) $\pi(\L(\aff(W))) = \Z^{d-1}.$}$$
Since $\phi$ is an invertible linear transformation,  we have that $\overline{W}:= \phi^{-1}(W) \subset U \subset V$ and $\aff(\overline{W})$ is $(d-1)$-dimensional. As we mentioned before, because $V$ is in $\pi$-general position, $\pi(\aff(\overline{W}))$ is $(d-1)$-dimensional.
\begin{alist}
\itm It is clear that $\pi \circ \phi = \psi \circ \pi.$ Thus, $\pi(W) = \pi (\phi(\overline{W})) = \psi(\pi(\overline{W})).$ Therefore, $\pi(\conv(W)) = \conv(\pi(W)) = \conv(\psi(\pi(\overline{W}))).$ However, since $\pi(\overline{W}) \subset \pi(V),$ $\conv(\psi(\pi(\overline{W})))$ is a lattice-face polytope.
\itm Since $c_d$ is a nonzero multiple of $c_{\overline{W}},$ we have that $$\pi(\L(\aff(W))) = \pi(\L(\aff(\phi(\overline{W})))) = \pi(\L(\aff(\phi_{c_d}(\nTilde{\psi}(\overline{W}))))) = \Z^{d-1}.$$
\end{alist}
\end{proof}











\bibliographystyle{amsplain}
\bibliography{gen}

\end{document}